\documentclass[12pt]{amsart}
\usepackage{amssymb,graphics}
\usepackage{hyperref}
\usepackage{tabularx}
\usepackage{booktabs}
\usepackage{caption}
\usepackage{mathdots}
\usepackage[usenames,dvipsnames]{xcolor}

\topmargin -0.5cm
\oddsidemargin -0.5cm
\evensidemargin -0.5cm
\topskip     0pt
\headheight  0pt
\footskip   18pt
\textheight 22.5cm
\textwidth 17cm

\newtheorem{thm}{Theorem}[section]
\newtheorem{lem}[thm]{Lemma}

\newtheorem{prop}[thm]{Proposition}
\newtheorem{ex}[thm]{Example}

\newtheorem*{prob*}{Open problem}

\theoremstyle{definition}

\newtheorem{defi}[thm]{Definition}

\theoremstyle{remark}

\newtheorem*{rem*}{Remark}


\DeclareMathOperator{\id}{id}

\DeclareMathOperator{\rad}{rad}

\DeclareMathOperator{\Aut}{Aut}

\newcommand{\kringel}{\mathbin{\raise1pt\hbox{$\scriptstyle\circ$}}}
\newcommand{\pkt}{\mathbin{\raise0pt\hbox{$\scriptstyle\bullet$}}}

\newcommand{\C}{\mathbb{C}}

\newcommand{\ad}{{\rm ad}}

\newcommand{\End}{{\rm End}}
\newcommand{\Der}{{\rm Der}}

\newcommand{\nil}{\mathop{\rm nil}}

\newcommand{\La}{\mathfrak{a}}
\newcommand{\Lb}{\mathfrak{b}}

\newcommand{\Lf}{\mathfrak{f}}
\newcommand{\Lg}{\mathfrak{g}}
\newcommand{\Lh}{\mathfrak{h}}

\newcommand{\Ll}{\mathfrak{l}}
\newcommand{\Ln}{\mathfrak{n}}
\newcommand{\Lm}{\mathfrak{m}}

\newcommand{\Lr}{\mathfrak{r}}
\newcommand{\Ls}{\mathfrak{s}}

\newcommand{\CA}{\mathcal{A}}

\newcommand{\im}{\mathop{\rm im}}

\newcommand{\ep}{\varepsilon}

\newcommand{\la}{\lambda}
\newcommand{\om}{\omega}

\newcommand{\ra}{\rightarrow}
\newcommand{\ck}{\checkmark}
\renewcommand{\phi}{\varphi}

\begin{document}


\title[Perfect Lie algebras]{Post-Lie algebra structures for perfect Lie algebras}
\author[D. Burde]{Dietrich Burde}
\author[K. Dekimpe]{Karel Dekimpe}
\author[M. Monadjem]{Mina Monadjem}
\address{Fakult\"at f\"ur Mathematik\\
Universit\"at Wien\\
Oskar-Morgenstern-Platz 1\\
1090 Wien \\
Austria}
\email{dietrich.burde@univie.ac.at}
\email{mina.monadjem@univie.ac.at}
\address{Katholieke Universiteit Leuven Kulak\\
E. Sabbelaan 53 bus 7657\\
8500 Kortrijk\\
Belgium}
\email{karel.dekimpe@kuleuven.be}
\date{\today}

\subjclass[2000]{Primary 17B30, 17D25}
\keywords{Post-Lie algebra structure, perfect Lie algebra}

\begin{abstract}
We study the existence of post-Lie algebra structures on pairs of Lie algebras $(\Lg,\Ln)$, where 
one of the algebras is perfect non-semisimple, and the other one is abelian, nilpotent non-abelian, 
solvable non-nilpotent, simple, semisimple non-simple, reductive non-semisimple or complete non-perfect. 
We prove several non-existence results, but also provide examples in some cases 
for the existence of a post-Lie algebra structure. 
Among other results we show that there is no post-Lie algebra structure on $(\Lg,\Ln)$, where 
$\Lg$ is perfect non-semisimple, and $\Ln$ is $\mathfrak{sl}_3(\C)$. We also show that there 
is no post-Lie algebra structure on $(\Lg,\Ln)$, where $\Lg$ is perfect and $\Ln$ is reductive 
with a $1$-dimensional center. 
\end{abstract}

\maketitle

\section{Introduction}

Post-Lie algebras and Post-Lie algebra structures (or PA-structures) on pairs of Lie algebras have 
been studied in many areas of mathematics during the last years. PA-structures are a natural 
generalization of pre-Lie algebra structures on Lie algebras, which arise among other things from 
affine manifolds and affine actions on Lie groups, crystallographic groups, \'etale affine 
representations of Lie algebras, quantum field theory, operad theory, Rota-Baxter operators, 
and deformation theory of rings and algebras. There is a large literature on pre-Lie and post-Lie 
algebras, see for example \cite{BU41,BU44,BU51,BU58,BU59,BU73,ELM,VAL} and the references therein. 
For a survey on pre-Lie algebra respectively post-Lie algebra structures see \cite{BU24, BU65}.\\[0.2cm]
In the present article we study the existence question of post-Lie algebra structures
on pairs of Lie algebras $(\Lg,\Ln)$, where one Lie algebra is perfect and the other one is
abelian, nilpotent, solvable, simple, semisimple, reductive, complete or perfect.
In all but three cases we can solve the existence question and generalize our 
previous results for semisimple Lie algebras to perfect Lie algebras. 
However, for these three cases of $(\Lg,\Ln)$, namely where $\Lg$ is perfect non-semisimple,
and $\Ln$ is either nilpotent, simple, or semisimple, we are not able solve the existence 
question in general. We conjecture that there do not exist post-Lie algebra structures 
in these cases. For some special families of examples we can prove this conjecture. \\[0.2cm]
The outline of this paper is as follows. In the second section we provide basic results on
perfect Lie algebras, including a classification of complex perfect Lie algebras of 
dimension $n\le 9$. We also recall the basic notions for post-Lie algebra structures. 
In the third section we study the existence question for pairs $(\Lg,\Ln)$ where $\Lg$ is perfect.
If $\Ln$ is nilpotent, then we show for perfect Lie algebras $\Lg$ of dimension $6$, that there exist
no post-Lie algebra structures on $(\Lg,\Ln)$. We also prove, using the classification of complex perfect
Lie algebras in low dimension, that there is no post-Lie algebra structure on $(\Lg,\Ln)$, where 
$\Ln=\mathfrak{sl}_3(\C)$. We find post-Lie algebra structures on $(\Lg,\Ln)$ for examples
of reductive Lie algebras $\Ln$, and show that such structures do not exist for reductive
Lie algebras $\Ln$ with a $1$-dimensional center. In the fourth section we study the existence 
question for pairs $(\Lg,\Ln)$ where $\Ln$ is perfect. Here we often find post-Lie algebra 
structures and are able to solve the existence problem in all cases.

\section{Preliminaries}

Let $\Lg$ be a finite-dimensional Lie algebra over a field $K$. Denote by $Z(\Lg)$ the center of $\Lg$,
by $\rad(\Lg)$ the solvable radical of $\Lg$, and by $\nil(\Lg)$ the nilradical of $\Lg$. A Lie algebra $\Lg$ is
called {\em perfect}, if $\Lg=[\Lg,\Lg]$. Every semisimple Lie algebra over a field of characteristic zero is perfect.
The converse does not hold. It is well known that the solvable radical of a perfect Lie algebra is nilpotent. One can also
give a necessary and sufficient condition for a Levi decomposition of $\Lg$, so that $\Lg$ is perfect.

\begin{prop}\label{2.1}
Let $\Lg=\Ls\ltimes \rad(\Lg)$ be a Levi decomposition of a Lie algebra $\Lg$ and consider 
$V=\Lr/[\Lr,\Lr]$ with $\Lr=\rad(\Lg)$ as an $\Ls$-module. Then $\Lg$ is perfect if and 
only if $V$ does not contain the trivial $1$-dimensional $\Ls$-module.
\end{prop}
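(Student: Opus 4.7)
The plan is to compute $[\Lg,\Lg]$ explicitly using the Levi decomposition and then translate the condition $\Lg = [\Lg,\Lg]$ into a statement about the $\Ls$-module $V$, which can be analyzed via Weyl's complete reducibility theorem.

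First I would expand, using $\Lg = \Ls \oplus \Lr$ as vector spaces and the fact that $\Ls$ is semisimple (hence $[\Ls,\Ls]=\Ls$),
\[
[\Lg,\Lg] = [\Ls,\Ls] + [\Ls,\Lr] + [\Lr,\Lr] = \Ls + [\Ls,\Lr] + [\Lr,\Lr].
\]
Since $\Lg = \Ls \oplus \Lr$, the equality $\Lg = [\Lg,\Lg]$ is therefore equivalent to
\[
\Lr = [\Ls,\Lr] + [\Lr,\Lr].
\]
Passing to the quotient $V = \Lr/[\Lr,\Lr]$ and writing the induced $\Ls$-action by $s\cdot \bar x := \overline{[s,x]}$, this condition becomes simply $\Ls \cdot V = V$. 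So the claim reduces to: $\Ls \cdot V = V$ if and only if $V$ contains no trivial $1$-dimensional $\Ls$-submodule.

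Next I would invoke Weyl's theorem (we are in characteristic zero, which is already implicit in the existence of the Levi decomposition) to decompose the finite-dimensional $\Ls$-module $V$ as a direct sum of irreducible $\Ls$-modules, $V = \bigoplus_i V_i$. For any irreducible $\Ls$-submodule $V_i$, the subspace $\Ls \cdot V_i$ is an $\Ls$-submodule; by irreducibility it is either $0$ or all of $V_i$. It is $0$ exactly when $\Ls$ acts trivially on $V_i$, which (together with irreducibility) forces $\dim V_i = 1$ and $V_i$ to be the trivial module; otherwise $\Ls \cdot V_i = V_i$. Consequently
\[
\Ls \cdot V = \bigoplus_{V_i \text{ nontrivial}} V_i,
\]
and the equality $\Ls \cdot V = V$ holds precisely when no summand $V_i$ is the trivial $1$-dimensional module, i.e.\ when $V$ contains no trivial $1$-dimensional $\Ls$-submodule. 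Combining the two equivalences gives the proposition.

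There is no genuine obstacle here; the proof is essentially a two-line computation together with an invocation of complete reducibility. The only point that deserves care is making sure the $\Ls$-module structure on $V$ is well defined, i.e.\ that $[\Ls,[\Lr,\Lr]] \subseteq [\Lr,\Lr]$, which follows from the Jacobi identity and the fact that $\Lr$ is an ideal of $\Lg$.
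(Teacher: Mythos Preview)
Your argument is correct. The paper does not actually supply a proof of this proposition; it is stated as a known preliminary fact and used later (e.g., in the proofs of Proposition~\ref{3.2} and Theorem~\ref{3.9}). Your computation of $[\Lg,\Lg]$ via the Levi decomposition, the reduction to $\Ls\cdot V=V$, and the appeal to Weyl's theorem is exactly the standard route one would take, and all steps are sound.
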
  

The lowest-dimensional example of a complex perfect non-semisimple Lie algebra is
$\mathfrak{sl}_2(\C)\ltimes V(2)$ of dimension $5$.
Here $\mathfrak{sl}_2(\C)={\rm span}\{ e_1,e_2,e_3\}$ has Lie brackets given by
$[e_1,e_2]=e_3$, $[e_1,e_3] = -2e_1$, $[e_2,e_3] = 2e_2$, and $V(2)={\rm span}\{e_4,e_5\}$ 
is the natural representation of $\mathfrak{sl}_2(\C)$. The Lie brackets of 
$\mathfrak{sl}_2(\C)\ltimes V(2)$ are given by
\vspace*{0.5cm}
\begin{align*}
[e_1,e_2] & = e_3,     & [e_2,e_3] & = 2e_2,    & [e_3,e_5]& =-e_5.\\
[e_1,e_3] & = -2e_1,   & [e_2,e_4] & = e_5,     &                  \\
[e_1,e_5] & = e_4,     & [e_3,e_4] & = e_4,     &                   \\
\end{align*}
This Lie algebra is perfect and has a non-trivial solvable radical.
Its center is trivial. We also give an example of a non-semisimple perfect
Lie algebra with non-trivial center.
For this, let $\mathfrak{sl}_2(\C)={\rm span}\{ e_1,e_2,e_3\}$
and $\mathfrak{n}_3(\C)={\rm span}\{ e_4,e_5,e_6\}$ be the Heisenberg Lie algebra, with
$[e_4,e_5]=e_6$. Consider the following semidirect sum of $\mathfrak{sl}_2(\C)$ and 
$\Ln_3(\C)$, given by the following Lie brackets in the basis  $(e_1,\ldots ,e_6)$:
\vspace*{0.5cm}
\begin{align*}
[e_1,e_2] & = e_3,     & [e_2,e_3] & = 2e_2,    & [e_3,e_5]& =-e_5,\\
[e_1,e_3] & = -2e_1,   & [e_2,e_4] & = e_5,     & [e_4,e_5] & = e_6.\\
[e_1,e_5] & = e_4,     & [e_3,e_4] & = e_4,     &                   \\
\end{align*}
We denote this Lie algebra by $\mathfrak{sl}_2(\C) \ltimes \Ln_3(\C)$.

\begin{ex}\label{2.2}
The Lie algebra $\Lg=\mathfrak{sl}_2(\C) \ltimes \Ln_3(\C)$ is perfect, but not semisimple. It has a $1$-dimensional
center.
\end{ex}  

Indeed, we have $Z(\Lg)={\rm span}\{ e_6\}$, and $\Lg$ is not semisimple. The nilradical $\Ln_3(\C)$ is isomorphic to
$V(2)\oplus V(1)$ as $\mathfrak{sl}_2(\C)$-module, where $V(n)$ denotes the irreducible $\mathfrak{sl}_2(\C)$-module of
dimension $n\ge 1$. It follows from the Lie brackets that $\Lg$ is perfect. We also can derive this from
Proposition $\ref{2.1}$. The quotient $V=\Ln_3/[\Ln_3,\Ln_3]$ is isomorphic to $V(2)$
and does not contain the trivial $1$-dimensional module $V(1)$. Hence $\Lg$ is perfect. \\[0.2cm]
In the study of PA-structures for perfect Lie algebras we are also interested in a classification of perfect Lie algebras
in low dimensions. Turkowski has classified Lie algebras with non-trivial Levi decomposition up to dimension $8$ over the real
numbers in \cite{TUR1}, where he lists explicit Lie brackets for all algebras. From this work it is not difficult to derive a
classification of complex perfect Lie algebras of dimension $n\le 8$. We need to add one Lie algebra though, which Turkowski
has not in his list. It is the complexification of the algebra $L_{8,13}^{\ep}$ for $\ep=0$, isomorphic to 
$\Ls\Ll_2(\C)\ltimes (V(2)\oplus \Ln_3(\C))$. Turkowski only allows $\ep=\pm 1$. \\[0.2cm]
There is another classification given by Alev, Ooms and Van den Bergh in \cite{ALE}, namely the classification of 
non-solvable algebraic Lie algebras of dimension $n\le 8$ over an algebraically closed field of 
characteristic zero. It also contains explicit Lie brackets for all algebras. Since perfect
Lie algebras are algebraic, we obtain again a list of complex perfect non-semisimple Lie algebras
of dimension $n\le 8$. This list coincides with the (corrected) one by Turkowski. Let $\Ln_5(\C)$ denote the
$5$-dimensional Heisenberg Lie algebra, with basis $(e_1,\ldots ,e_5)$ and Lie brackets
\[
[e_1,e_2]=[e_3,e_4]=e_5.
\]  
Denote by $\Lf_{2,3}(\C)$ the free-nilpotent Lie algebra with $2$ generators and nilpotency class $3$, with basis
$(e_1,\ldots, e_5)$ and Lie brackets
\[
[e_1,e_2]=e_3,\; [e_1,e_3]=e_4,\; [e_2,e_3]=e_5.
\]  
The classification result is as follows.

\begin{prop}\label{2.3}
Every complex perfect non-semisimple Lie algebra of dimension $n\le 8$ is isomorphic to one of the 
following Lie algebras:
\vspace*{0.5cm}
\begin{center}
\begin{tabular}{c|cccc}
\color{red}{$\Lg$} & \color{blue}{$\dim \Lg$} & \color{purple}{$\dim Z(\Lg)$} 
& \color{ForestGreen}{Turkowski} & \color{orange}{Alev et al.} \\[4pt]
\hline
$\Ls\Ll_2(\C)\ltimes V(2)$ & $5$ & $0$ & $L_{5,1}$  & $L_5$ \\[4pt]
$\Ls\Ll_2(\C)\ltimes V(3)$ & $6$ & $0$ & $L_{6,4}\cong L_{6,1}$ & $L_{6,1}$\\[4pt]
$\Ls\Ll_2(\C)\ltimes \Ln_3(\C)$  & $6$ & $1$ & $L_{6,2}$  & $L_{6,3}$  \\[4pt]
$\Ls\Ll_2(\C)\ltimes V(4)$ & $7$ & $0$ & $L_{7,6}$  & $L_{7,1}$  \\[4pt]
$\Ls\Ll_2(\C)\ltimes (V(2)\oplus V(2))$ & $7$ & $0$ & $L_{7,7}$  & $L_{7,2}$ \\[4pt]
$\Ls\Ll_2(\C)\oplus (\Ls\Ll_2(\C)\ltimes V(2))$ & $8$ & $0$ & $\Ls\Ll_2\oplus L_{5,1}$ 
& $\mathfrak{sl}_2\oplus L_5$ \\[4pt]
$\Ls\Ll_2(\C)\ltimes V(5)$ & $8$ & $0$ & $L_{8,21}$  & $L_{8,1}$  \\[4pt]
$\Ls\Ll_2(\C)\ltimes (V(2)\oplus V(3))$ & $8$ & $0$ & $L_{8,22}$  & $L_{8,2}$ \\[4pt]
$\Ls\Ll_2(\C)\ltimes (V(2)\oplus \Ln_3(\C))$ & $8$ & $1$ & $\color{red}{L_{8,13}^{\ep=0}}$  
& $L_{8,13}$  \\[4pt]
$\Ls\Ll_2(\C)\ltimes \Lf_{2,3}(\C)$ & $8$ & $0$ & $L_{8,15}$  & $L_{8,18}$ \\[4pt]
$\Ls\Ll_2(\C)\ltimes_{\phi} \Ln_5(\C)$ & $8$ & $1$ & $L_{8,13}^1\cong L_{8,13}^{-1}$  & $L_{8,15}$ \\[4pt]
$\Ls\Ll_2(\C)\ltimes_{\psi} \Ln_5(\C)$ & $8$ & $1$ & $L_{8,19}$  & $L_{8,16}$ \\[4pt]
\end{tabular}
\end{center}
\vspace*{0.5cm}
\end{prop}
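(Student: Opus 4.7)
The plan is to reduce to the existing classifications of Turkowski \cite{TUR1} and of Alev, Ooms and Van den Bergh \cite{ALE}, and to verify and complete the resulting list. First, I would invoke the Levi decomposition $\Lg=\Ls\ltimes\Lr$ with $\Lr=\rad(\Lg)$. Since $\Lg$ is perfect, $\Lr$ is nilpotent, and since $\Lg$ is neither solvable nor semisimple both $\Ls$ and $\Lr$ are nonzero. Over $\C$ the only semisimple Lie algebras of dimension at most $7$ are $\Ls\Ll_2(\C)$ and $\Ls\Ll_2(\C)\oplus\Ls\Ll_2(\C)$, since $\Ls\Ll_3(\C)$ already has dimension $8$ and would force $\Lr=0$. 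Hence, for $n\le 8$, one has $\dim\Lr\in\{1,\ldots,5\}$ in the first case and $\dim\Lr\in\{1,2\}$ in the second.

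The second step is to complexify Turkowski's classification of real Lie algebras with non-trivial Levi factor, discard the semisimple entries, and identify each remaining algebra with a semidirect product $\Ls\ltimes\Lr$ by decomposing $\Lr$ into irreducible $\Ls$-modules $V(k)$. Perfectness is then checked case by case via Proposition~\ref{2.1}: one verifies that the trivial module $V(1)$ does not appear in the abelianisation $\Lr/[\Lr,\Lr]$. For each algebra listed in the table this is immediate from the $\Ls$-module decomposition, for instance $\Lr/[\Lr,\Lr]=V(2)$ for $\Ls\Ll_2(\C)\ltimes\Ln_3(\C)$, as already noted in Example~\ref{2.2}. The dimension of $Z(\Lg)$ is then read off from the $\Ls$-invariant part of $Z(\Lr)$ together with the explicit bracket relations.

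The subtle point, and the main obstacle, is the completeness of Turkowski's list: he parametrises a family $L_{8,13}^{\ep}$ only for $\ep=\pm 1$, while the degenerate value $\ep=0$ also yields a valid complex perfect Lie algebra, namely $\Ls\Ll_2(\C)\ltimes(V(2)\oplus\Ln_3(\C))$, which must be inserted by hand. As an independent verification I would compare with the classification of non-solvable algebraic Lie algebras in \cite{ALE}, which is applicable here because every perfect Lie algebra over a field of characteristic zero is algebraic. Restricting that list to perfect algebras and matching names via the $\Ls$-module structure of the nilradical then reproduces the table above, and the agreement of the two independently obtained classifications, after the correction described, establishes the proposition.
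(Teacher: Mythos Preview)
Your proposal is correct and follows essentially the same approach as the paper: the paper does not give a formal proof environment for this proposition but derives it in the preceding discussion by complexifying Turkowski's list, inserting the missing algebra $L_{8,13}^{\ep=0}\cong \Ls\Ll_2(\C)\ltimes(V(2)\oplus\Ln_3(\C))$, and cross-checking against the classification of non-solvable algebraic Lie algebras in \cite{ALE} using the fact that perfect Lie algebras are algebraic. Your added remarks on the possible Levi factors and the use of Proposition~\ref{2.1} to verify perfectness make the argument more explicit but do not change its substance.
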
  

Turkowski has also classified in \cite{TUR2} real and complex Lie algebras with 
non-trivial Levi decomposition in dimension $9$. This yields a list of complex perfect 
Lie algebras of dimension $9$.
Denote by $\Lf_{3,2}(\C)$ the free-nilpotent Lie algebra with $3$ generators and nilpotency class $2$, and
by $\CA_{6,4}(\C)$ the $2$-step nilpotent Lie algebra with basis $(e_1,\ldots ,e_6)$ and Lie brackets
\[
[e_1,e_4]=e_5,\; [e_2,e_3]=-e_5,\; [e_3,e_4]=e_6.
\]
Then we have the following result.

\begin{prop}\label{2.4}
Every complex perfect non-semisimple Lie algebra of dimension $9$ is isomorphic to one of the following Lie algebras:

\vspace*{0.5cm}
\begin{center}
\begin{tabular}{c|ccc}
\color{red}{$\Lg$} & \color{blue}{$\dim \Lg$} & \color{purple}{$\dim Z(\Lg)$} & \color{ForestGreen}{Turkowski}  \\[4pt]
\hline
$\Ls\Ll_2(\C)\oplus (\Ls\Ll_2(\C)\ltimes V(3))$ & $9$ & $0$ & $\Ls\Ll_2\oplus L_{6,1}$  \\[4pt]
$\Ls\Ll_2(\C)\oplus (\Ls\Ll_2(\C)\ltimes \Ln_3(\C))$  & $9$ & $1$ & $\Ls\Ll_2\oplus L_{6,2}$ \\[4pt]
$\Ls\Ll_2(\C)\ltimes V(6)$ & $9$ & $0$ & $L_{9,59}$  \\[4pt]
$\Ls\Ll_2(\C)\ltimes (V(2)\oplus V(4))$ & $9$ & $0$ & $L_{9,60}$  \\[4pt]
$\Ls\Ll_2(\C)\ltimes (V(3)\oplus V(3))$ & $9$ & $0$ & $L_{9,61}$  \\[4pt]
$\Ls\Ll_2(\C)\ltimes (V(2)\oplus V(2)\oplus V(2))$ & $9$ & $0$ & $L_{9,63}$  \\[4pt]
$\Ls\Ll_2(\C)\ltimes (V(3)\oplus \Ln_3(\C))$ & $9$ & $1$ & $L_{9,58}$  \\[4pt]
$\Ls\Ll_2(\C)\ltimes (\Ln_3(\C)\oplus \Ln_3(\C))$ & $9$ & $2$ & $L_{9,37}\cong L_{9,42}$  \\[4pt]
$\Ls\Ll_2(\C)\ltimes \Lf_{3,2}(\C)$ & $9$ & $0$ & $L_{9,62}$  \\[4pt]
$\Ls\Ll_2(\C)\ltimes \CA_{6,4}(\C)$ & $9$ & $2$ & $L_{9,41}$  \\[4pt]
\end{tabular}
\end{center}
\vspace*{0.5cm}
In particular the nilradical of such an algebra always has nilpotency class $c\le 2$.
\end{prop}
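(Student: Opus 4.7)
The plan is to derive the table by filtering Turkowski's classification in \cite{TUR2} of nine-dimensional complex Lie algebras with non-trivial Levi decomposition, using Proposition~\ref{2.1} to single out precisely the perfect ones.

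As a preliminary reduction, since $\Lg$ is perfect its radical $\Lr = \rad(\Lg)$ is nilpotent, and writing a Levi decomposition $\Lg = \Ls \ltimes \Lr$ with $\Lr \neq 0$ forces $\dim \Ls \in \{3,6\}$: either $\Ls \cong \Ls\Ll_2(\C)$ with $\dim \Lr = 6$, or $\Ls \cong \Ls\Ll_2(\C)\oplus \Ls\Ll_2(\C)$ with $\dim \Lr = 3$. Proposition~\ref{2.1} then converts perfection into the representation-theoretic requirement that the $\Ls$-module $V = \Lr/[\Lr,\Lr]$ contain no trivial summand.

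Next I would traverse Turkowski's list case by case. When $\dim \Ls = 6$ the radical is either $\C^3$ or $\Ln_3(\C)$; decomposing the action of $\Ls\Ll_2(\C) \oplus \Ls\Ll_2(\C)$ into external tensor products $V(m)\boxtimes V(n)$ and excluding the trivial summand $V(1)\boxtimes V(1)$ leaves exactly the two rows with disconnected Levi subalgebra. When $\dim \Ls = 3$ the radical is a six-dimensional nilpotent $\Ls\Ll_2(\C)$-Lie algebra; I would enumerate the possible structures of $\Lr$ together with an $\Ls\Ll_2(\C)$-action by derivations, decompose $V$ into irreducible modules $V(k)$, and match each resulting algebra with the corresponding $L_{9,\ast}$ in Turkowski's list, discarding those entries for which $V$ contains $V(1)$.

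The bulk of the work is purely organizational: interpreting Turkowski's parametrized real families over $\C$ (where different real forms may collapse to the same complex algebra) and reading off the $\Ls\Ll_2(\C)$-module structure of each $\Lr$ from the tabulated structure constants. Once the perfect entries are isolated, the final assertion of the proposition follows by direct inspection of the seven nilradicals appearing in the table, namely $\C^3$, $\C^6$, $\Ln_3(\C)$, $\C^3 \oplus \Ln_3(\C)$, $\Ln_3(\C)\oplus\Ln_3(\C)$, $\Lf_{3,2}(\C)$ and $\CA_{6,4}(\C)$, each of which is at most $2$-step nilpotent.
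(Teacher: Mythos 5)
Your proposal is correct and follows essentially the same route as the paper, which likewise obtains the table by filtering Turkowski's dimension-$9$ classification in \cite{TUR2} through the perfectness criterion of Proposition~\ref{2.1}. The only point you leave implicit in the reduction to $\dim\Ls\in\{3,6\}$ is the exclusion of a Levi factor isomorphic to $\mathfrak{sl}_3(\C)$ with a one-dimensional radical, but this is immediate since a one-dimensional module over a semisimple Lie algebra is trivial, so Proposition~\ref{2.1} rules it out.
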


We recall the definition of a post-Lie algebra structure on a pair of Lie algebras $(\Lg,\Ln)$ over a field
$K$, see \cite{BU41}:

\begin{defi}\label{pls}
Let $\Lg=(V, [\, ,])$ and $\Ln=(V, \{\, ,\})$ be two Lie brackets on a vector space $V$ over
$K$. A {\it post-Lie algebra structure}, or {\em PA-structure} on the pair $(\Lg,\Ln)$ is a
$K$-bilinear product $x\cdot y$ satisfying the identities:
\begin{align}
x\cdot y -y\cdot x & = [x,y]-\{x,y\} \label{post1}\\
[x,y]\cdot z & = x\cdot (y\cdot z) -y\cdot (x\cdot z) \label{post2}\\
x\cdot \{y,z\} & = \{x\cdot y,z\}+\{y,x\cdot z\} \label{post3}
\end{align}
for all $x,y,z \in V$.
\end{defi}

Define by  $L(x)(y)=x\cdot y$ the left multiplication operators of the algebra $A=(V,\cdot)$.
By \eqref{post3}, all $L(x)$ are derivations of the Lie algebra $(V,\{,\})$. Moreover, by \eqref{post2}, the left multiplication
\[
L\colon \Lg\ra \Der(\Ln)\subseteq \End (V),\; x\mapsto L(x)
\]
is a linear representation of $\Lg$. \\
If $\Ln$ is abelian, then a post-Lie algebra structure on $(\Lg,\Ln)$ corresponds to a {\it pre-Lie algebra structure},
or left-symmetric structure on $\Lg$. In other words, if $\{x,y\}=0$ for all $x,y\in V$, then the conditions reduce to
\begin{align}
x\cdot y-y\cdot x & = [x,y] \label{pre1} \\
[x,y]\cdot z & = x\cdot (y\cdot z)-y\cdot (x\cdot z), \label{pre2}
\end{align}
i.e., $x\cdot y$ is a {\it pre-Lie algebra structure} on the Lie algebra $\Lg$. \\[0.2cm]
For semisimple Lie algebras $\Ln$ we have the following result on PA-structures on pairs $(\Lg,\Ln)$, see
Proposition $2.14$ in \cite{BU41}.

\begin{prop}\label{2.6}
Let $\Ln$ be a semisimple Lie algebra. Then a pair $(\Lg,\Ln)$ admits a PA-structure if and only if there is
an injective Lie algebra homomorphism $\phi\colon \Lg \hookrightarrow \Ln\oplus \Ln$ such that the map
$(p_1-p_2)_{\mid \Lg} \colon \Lg\ra \Ln$ is bijective. Here $p_i\colon \Ln\oplus \Ln \ra \Ln$ denotes the projection
onto the $i$-th factor for $i=1,2$.
\end{prop}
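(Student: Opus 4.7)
The plan is to translate PA-structures on $(\Lg,\Ln)$ into pairs of Lie algebra homomorphisms $\Lg\to\Ln$, leveraging that for semisimple $\Ln$ in characteristic zero one has $Z(\Ln)=0$ and $\Der(\Ln)=\ad_\Ln(\Ln)$, so $\ad_\Ln\colon\Ln\to\Der(\Ln)$ is a Lie algebra isomorphism. Any Lie algebra homomorphism from $\Lg$ into $\Der(\Ln)$ therefore lifts uniquely to a Lie algebra homomorphism $\Lg\to\Ln$.

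For $(\Rightarrow)$, I would start with a PA-structure $x\cdot y$. Axioms \eqref{post2} and \eqref{post3} together say that the left multiplication $L\colon\Lg\to\Der(\Ln)$, $x\mapsto L(x)$, is a Lie algebra homomorphism. Composing with $\ad_\Ln^{-1}$ produces a Lie algebra homomorphism $\alpha\colon\Lg\to\Ln$ satisfying $L(x)=\ad_\Ln(\alpha(x))$, i.e.\ $x\cdot y=\{\alpha(x),y\}$. Next, I set $\beta(x):=\alpha(x)+x$ and check $\beta$ is also a Lie algebra homomorphism: expanding $\{\beta(x),\beta(y)\}$ by bilinearity, using the Lie hom property of $\alpha$, and invoking axiom \eqref{post1} in the rearranged form $\{\alpha(x),y\}+\{x,\alpha(y)\}=[x,y]-\{x,y\}$ telescopes the sum to $\alpha([x,y])+[x,y]=\beta([x,y])$. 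The map $\phi:=(\alpha,\beta)\colon\Lg\to\Ln\oplus\Ln$ is then an injective Lie algebra homomorphism, and $(p_1-p_2)\circ\phi=-\id_V$ is bijective.

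For $(\Leftarrow)$, given $\phi=(\alpha,\beta)$ an injective Lie algebra homomorphism with $T:=\alpha-\beta$ bijective, I reverse this recipe. After reducing to the normalized case $T=-\id_V$, equivalently $\beta=\alpha+\id$, I define $x\cdot y:=\{\alpha(x),y\}$. Then $L(x)=\ad_\Ln(\alpha(x))$ is an inner derivation, giving axiom \eqref{post3}; $L=\ad_\Ln\circ\alpha$ is a composition of Lie homomorphisms, giving axiom \eqref{post2}; and axiom \eqref{post1} is precisely the unpacked Lie hom property of $\beta=\alpha+\id$, the computation of $(\Rightarrow)$ read backwards.

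The main obstacle is the normalization step in $(\Leftarrow)$: the hypothesis only supplies that $T$ is a bijective linear map, not that $T=-\id_V$. I expect this to be handled through the complementary-subalgebra picture $\Ln\oplus\Ln=\phi(\Lg)\oplus\Delta$, where $\Delta=\{(x,x):x\in\Ln\}$ is the diagonal (complementary to $\phi(\Lg)$ exactly because $T$ is bijective); regraphing $\phi(\Lg)$ over $\Delta$ should produce an equivalent embedding with $\alpha-\beta=-\id_V$, and verifying that this regraphed map is still a Lie algebra homomorphism looks to me like the technical heart of the argument.
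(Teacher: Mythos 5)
Your argument follows essentially the same route as the proof the paper relies on (Proposition $2.14$ of \cite{BU41}): use that $\Ln$ semisimple has trivial center and only inner derivations to identify $\Der(\Ln)$ with $\Ln$, extract $\alpha$ from the left multiplication operators, and show that $\beta=\alpha+\id$ is again a Lie algebra homomorphism by combining the homomorphism property of $\alpha$ with axiom \eqref{post1}. Your forward direction and your verification of the converse in the normalized case $\beta=\alpha+\id$ are complete and correct. The one step you leave open --- reducing the converse to $T=-\id_V$ --- is genuinely needed but is much easier than you suggest, and no regraphing of $\phi(\Lg)$ over the diagonal $\Delta$ is required. The point is that in the existence problem the identification of the underlying vector spaces of $\Lg$ and $\Ln$ is not fixed in advance; it is part of what one constructs. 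Given $\phi=(\alpha,\beta)$ with $T:=\beta-\alpha$ bijective, transport the bracket of $\Lg$ to the underlying space of $\Ln$ along $T$, i.e.\ replace $\Lg$ by the isomorphic Lie algebra $\Lg'$ with bracket $[u,v]':=T\bigl([T^{-1}u,T^{-1}v]\bigr)$ and replace $\phi$ by $\phi\circ T^{-1}$. The latter is still an injective Lie algebra homomorphism simply because $T\colon\Lg\to\Lg'$ is an isomorphism by construction, and its components satisfy $\alpha'-\beta'=(\alpha-\beta)\circ T^{-1}=-\id_V$. Now your normalized computation applies verbatim and yields the PA-structure $u\cdot v=\{\alpha'(u),v\}$ on $(\Lg',\Ln)\cong(\Lg,\Ln)$. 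With this observation inserted, your proof is complete and agrees with the cited one.
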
  

Let us denote the composition of $p_i$ and $\phi$ by $j_i= \phi\circ p_i\colon \Lg\ra \Ln$  for $i=1,2$. \\[0.2cm]
We also recall the following results, see Propositions $2.14$ and $2.21$ in \cite{BU59}.

\begin{prop}\label{2.7}
Let $(\Lg,\Ln)$ be a pair of Lie algebras, where $\Ln$ is complete. Then there is a bijection between PA-structures
on $(\Lg,\Ln)$ and Rota-Baxter operators $R$ of weight $1$ on $\Ln$. Every such PA-structure is then of the form
$x\cdot y=\{R(x),y\}$. If $\Lg$ and $\Ln$ are not isomorphic, then both $\ker(R)$ and $\ker(R+\id)$ are nonzero ideals in $\Lg$.
\end{prop}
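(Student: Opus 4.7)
The plan is to exploit the completeness of $\Ln$ to identify derivations with adjoints, and then translate the three PA-axioms into a single Rota-Baxter identity for a linear map $R\colon V\to V$.

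First, since $\Ln$ is complete, $Z(\Ln)=0$ and $\Der(\Ln)=\Inn(\Ln)$, so $\ad_\Ln\colon \Ln\to\Der(\Ln)$ is a Lie algebra isomorphism. Given a PA-structure on $(\Lg,\Ln)$, axiom \eqref{post3} forces each $L(x)$ into $\Der(\Ln)$, so there is a unique $R(x)\in V$ with $L(x)=\ad_\Ln(R(x))$; this gives $x\cdot y=\{R(x),y\}$ and defines a linear map $R\colon V\to V$. Axiom \eqref{post1} then rewrites as $[x,y]=\{R(x),y\}+\{x,R(y)\}+\{x,y\}$, and axiom \eqref{post2} reads $L([x,y])=[L(x),L(y)]$ in $\Der(\Ln)$. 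Using that $\ad_\Ln$ is an isomorphism, the latter becomes $R([x,y])=\{R(x),R(y)\}$; substituting the bracket formula yields
\[
R\bigl(\{R(x),y\}+\{x,R(y)\}+\{x,y\}\bigr)=\{R(x),R(y)\},
\]
which is precisely the Rota-Baxter identity of weight $1$ on $\Ln$.

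For the converse I would start with a weight $1$ Rota-Baxter operator $R$ on $\Ln$ and set $x\cdot y:=\{R(x),y\}$. Axiom \eqref{post3} holds because $\ad_\Ln(R(x))$ is a derivation of $\Ln$, and a direct check shows \eqref{post1} and \eqref{post2} hold with the bracket $[x,y]=\{R(x),y\}+\{x,R(y)\}+\{x,y\}$ of $\Lg$. Since these two constructions are mutually inverse, this establishes the bijection.

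Finally, the identity $R([x,y])=\{R(x),R(y)\}$ says that $R\colon\Lg\to\Ln$ is a Lie algebra homomorphism, so $\ker R$ is an ideal of $\Lg$. Expanding $\{(R+\id)(x),(R+\id)(y)\}$ and using the bracket formula gives
\[
\{(R+\id)(x),(R+\id)(y)\}=(R+\id)([x,y]),
\]
so $R+\id\colon\Lg\to\Ln$ is likewise a Lie algebra homomorphism and $\ker(R+\id)$ is an ideal of $\Lg$. If either kernel were zero then the corresponding map would be injective, hence a Lie algebra isomorphism by finite-dimensionality, giving $\Lg\cong\Ln$; contrapositively, when $\Lg\not\cong\Ln$ both $\ker R$ and $\ker(R+\id)$ are nonzero. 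The main obstacle is really just the bookkeeping to see the Rota-Baxter identity fall out; once completeness allows us to write $L(x)=\ad_\Ln(R(x))$, everything else is rewriting the PA-axioms in terms of $R$.
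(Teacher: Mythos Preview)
Your argument is correct and is precisely the standard one. The paper itself does not prove this proposition; it merely records it, citing Propositions~2.14 and~2.21 of \cite{BU59}, and your write-up reconstructs exactly the argument one finds there: use completeness to replace $L(x)$ by $\ad_\Ln(R(x))$, translate axioms \eqref{post1}--\eqref{post3} into the weight-$1$ Rota-Baxter identity, and observe that both $R$ and $R+\id$ are Lie algebra homomorphisms $\Lg\to\Ln$, hence isomorphisms whenever their kernels vanish.

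One small remark on the phrasing of the bijection: as you implicitly notice in the converse direction, the Lie bracket of $\Lg$ is not free but is forced by $R$ via $[x,y]=\{R(x),y\}+\{x,R(y)\}+\{x,y\}$. So the bijection is really between Rota-Baxter operators on $\Ln$ and PA-structures on pairs $(\Lg,\Ln)$ with $\Ln$ fixed and $\Lg$ determined by $R$; equivalently, for a \emph{fixed} pair $(\Lg,\Ln)$ the PA-structures correspond to those Rota-Baxter operators whose associated bracket agrees with $[\,,\,]$. Your proof handles this correctly, and this is also how the result is used later in the paper (e.g.\ in Proposition~\ref{3.10} and Example~\ref{4.3}).
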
  

Here a Rota-Baxter algebra operator, for a nonassociative algebra over a field $K$, of weight $\la\in K$ is a linear
operator $R\colon A\ra A$ satisfying
\[
R(x)R(y)=R(R(x)y+xR(y)+\la xy) 
\]
for all $x,y\in A$.

\section{PA-structures with $\Lg$ perfect}

In this section we study the existence question of PA-structures on pairs of complex Lie algebras $(\Lg,\Ln)$, where
$\Lg$ is perfect non-semisimple. We consider $7$ different cases for $\Ln$, namely $(a)$ $\Ln$ is abelian,
$(b)$ $\Ln$ is nilpotent non-abelian, $(c)$ $\Ln$ is solvable non-nilpotent, $(d)$ $\Ln$ is simple,
$(e)$ $\Ln$ is semisimple non-simple, $(f)$ $\Ln$ is reductive non-semisimple, and $(g)$ $\Ln$ is complete non-perfect. \\[0.2cm]
We start with case $(a)$.

\begin{prop}\label{3.1}
There is no PA-structure on a pair $(\Lg,\Ln)$, where $\Lg$ is perfect and $\Ln$ is abelian.
\end{prop}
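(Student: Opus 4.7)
The first step is a direct reduction. Since $\Ln$ is abelian we have $\{y,z\}=0$ for all $y,z\in V$, so \eqref{post3} is vacuous and \eqref{post1}--\eqref{post2} specialise to precisely the pre-Lie axioms \eqref{pre1}--\eqref{pre2}. Hence a PA-structure on $(\Lg,\Ln)$ with $\Ln$ abelian is nothing but a pre-Lie algebra structure on $\Lg$, as already remarked after Definition \ref{pls}. The proposition therefore reduces to the classical statement that a finite-dimensional perfect Lie algebra admits no pre-Lie algebra structure; at this point one could simply cite the survey literature on pre-Lie algebras such as \cite{BU24,BU65}.

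For a self-contained argument I would use the standard trace/cohomology method. Suppose for contradiction that $x\cdot y$ is a pre-Lie product on a perfect Lie algebra $\Lg$, and set $L(x)y=x\cdot y$, $R(x)y=y\cdot x$. Equation \eqref{pre2} makes $L\colon \Lg\to\gl(\Lg)$ a Lie algebra representation, so $\tr\circ L\colon \Lg\to K$ is a Lie algebra character and vanishes on $\Lg=[\Lg,\Lg]$. From \eqref{pre1} one obtains $R(x)=L(x)-\ad(x)$, and because $\tr\ad$ also vanishes on $[\Lg,\Lg]=\Lg$, the trace $\tr R$ vanishes identically as well. A direct check using \eqref{pre1} shows that the identity map $\iota\colon \Lg\to\Lg$, $\iota(x)=x$, is a $1$-cocycle for the representation $L$. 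The plan is to show that $\iota$ is actually a $1$-coboundary: then there exists $v\in\Lg$ with $L(x)v=x$ for all $x\in \Lg$, i.e.\ $v$ is a right identity and $R(v)=\id_\Lg$. This forces
\[
\dim\Lg=\tr R(v)=0,
\]
which is the desired contradiction.

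The main obstacle is the coboundary step. For semisimple $\Lg$ it is immediate from Whitehead's first lemma. For perfect, non-semisimple $\Lg$ one writes a Levi decomposition $\Lg=\Ls\ltimes\rad(\Lg)$ and handles $\Ls$ by Whitehead, after which the remaining task is to kill the cocycle on $\rad(\Lg)$. Here the crucial input is that $\Lg$ being perfect forces $\rad(\Lg)$ to be nilpotent, and by Proposition \ref{2.1} the $\Ls$-module $\rad(\Lg)/[\rad(\Lg),\rad(\Lg)]$ contains no trivial summand. A Hochschild--Serre analysis of $H^1(\Lg,\Lg)$ applied to $0\to\rad(\Lg)\to\Lg\to\Ls\to 0$ then removes the last obstruction and realises $\iota$ as a coboundary, completing the contradiction. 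This cohomological step on the radical is the technically delicate part of the proof.
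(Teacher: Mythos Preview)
Your first paragraph is correct and is exactly the paper's proof: reduce to a pre-Lie structure on $\Lg$ and then cite the known result that perfect Lie algebras carry no pre-Lie structure. The paper cites Corollary~21 of \cite{HEL} for this; your reference to the surveys \cite{BU24,BU65} is an acceptable substitute.

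The self-contained argument you sketch afterwards (which the paper does not attempt) is complete for semisimple $\Lg$, but the extension to perfect non-semisimple $\Lg$ has a genuine gap. From the Hochschild--Serre five-term sequence for $0\to\Lr\to\Lg\to\Ls\to 0$ one gets $H^1(\Lg,M)\cong H^1(\Lr,M)^{\Ls}$ (using Whitehead for both $H^1$ and $H^2$ of $\Ls$), so you must show that the class of $\iota|_{\Lr}$ vanishes in $H^1(\Lr,M)^{\Ls}$. The fact that $\Lr/[\Lr,\Lr]$ has no trivial $\Ls$-summand does not by itself force this: $\Lr$ is nilpotent, $H^1(\Lr,M)$ is typically large, and there is no obvious reason its $\Ls$-invariants should be zero or miss this particular class. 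You flag this step as ``technically delicate'' but do not actually carry it out. Helmstetter's original argument does not go through cohomology at all; it uses the structure theory of left-symmetric algebras (the left-symmetric radical, built from left ideals on which right multiplications are nilpotent) to produce a right identity in a suitable quotient, and the perfectness hypothesis enters in a different way. If you want a self-contained proof, that is the route to follow rather than a bare Hochschild--Serre argument.
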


\begin{proof}
A PA-structure on a pair $(\Lg,\Ln)$, where $\Ln$ is abelian, corresponds to a left-symmetric (or pre-Lie algebra)
structure on $\Lg$. By Corollary $21$ of \cite{HEL} there is no such structure on a perfect Lie algebra.
\end{proof}  

For case $(b)$ we only have partial results so far. In Proposition $3.6$ of \cite{BU51} we have proved
that there is no post-Lie algebra structure on $(\Lg,\Ln)$,  where $\Lg$ is perfect of dimension $5$, namely
$\Lg=\Ls\Ll_2(\C)\ltimes V(2)$, and $\Ln$ is nilpotent. We can generalize this result to perfect Lie algebras
$\Lg$ of dimension $6$. According to Proposition $2.3$, the non-semisimple perfect Lie algebras of dimension $6$ are given
by $\mathfrak{sl}_2(\C)\ltimes V(3)$ and $\mathfrak{sl}_2(\C)\ltimes \Ln_3(\C)$.

\begin{prop}\label{3.2}
Let $(\Lg,\Ln)$ be a pair of Lie algebras, where $\Lg$ is either $\Ls\Ll_2(\C)\ltimes V(3)$ or $\Ls\Ll_2(\C)\ltimes \Ln_3(\C)$,
and $\Ln$ is nilpotent. Then there is no PA-structure on $(\Lg,\Ln)$.
\end{prop}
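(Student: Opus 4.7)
A PA-structure on $(\Lg,\Ln)$ is equivalent to a Lie algebra homomorphism $L\colon\Lg\to\Der(\Ln)$ (encoding \eqref{post2} and \eqref{post3}) whose associated product $x\cdot y=L(x)(y)$ also satisfies the skew condition \eqref{post1}. The plan is to assume such an $L$ exists and argue by case analysis on the ideal $\ker L\triangleleft\Lg$. The ideal lattices are very small: for $\Lg=\Ls\Ll_2(\C)\ltimes V(3)$ they are $\{0,V(3),\Lg\}$, since $V(3)$ is irreducible as an $\Ls\Ll_2(\C)$-module, and for $\Lg=\Ls\Ll_2(\C)\ltimes\Ln_3(\C)$ they are $\{0,Z(\Lg),\Ln_3(\C),\Lg\}$. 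This reduces the problem to a short list of subcases, in the spirit of Proposition~3.6 of \cite{BU51}.

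First I would dispose of the easy cases. If $\ker L=\Lg$, then $L\equiv 0$, so \eqref{post1} gives $[x,y]=\{x,y\}$, forcing $\Lg=\Ln$ and contradicting perfect versus nilpotent. If $\ker L$ contains the radical of $\Lg$, then the induced map $\Ls\Ll_2(\C)\to\Der(\Ln)$ is either zero (same contradiction) or injective, giving $\Ln$ a non-trivial $\Ls\Ll_2(\C)$-action by derivations. In the latter situation I would decompose $V$ as an $\Ls\Ll_2(\C)$-module via $L$, compare with the $\ad_\Lg$-decomposition, and use \eqref{post1} together with the nilpotency of $\ad_\Ln$ to contradict the fact that the radical side of $\Lg$ acts nontrivially on $V$.

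The remaining cases are $\ker L=0$ and, for the second algebra, $\ker L=Z(\Lg)$; then $L$ embeds $\Lg$, respectively $\Lg/Z(\Lg)\cong\Ls\Ll_2(\C)\ltimes V(2)$, into $\Der(\Ln)$. In the latter subcase I would aim for a direct reduction to the already-known five-dimensional non-existence of Proposition~3.6 in \cite{BU51}. In the genuine embedding case, I would consult the classification of complex $6$-dimensional nilpotent Lie algebras together with the known structure of their derivation algebras, list those $\Ln$ that admit $\Lg$ (or $\Ls\Ll_2(\C)\ltimes V(2)$) as a subalgebra of $\Der(\Ln)$ at all, and check in each case that the skew condition \eqref{post1} fails. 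Useful leverage comes from the perfectness of $L(\Lg)\subseteq[\Der(\Ln),\Der(\Ln)]$ and from the identity $L(x)-R(x)=\ad_\Lg(x)-\ad_\Ln(x)$, whose right-hand side is nilpotent whenever $x\in\rad(\Lg)$.

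The hard part will be this last injective case. The kernel analysis confines us to a finite list of configurations, but verifying the failure of \eqref{post1} requires either careful weight-basis calculation or a more conceptual argument exploiting the nilradical filtration of $\Der(\Ln)$. I expect that producing a uniform obstruction valid for all six-dimensional nilpotent $\Ln$ at once will be the main technical difficulty; failing that, a case-by-case pass through the classification of such $\Ln$, combined with the $\Ls\Ll_2(\C)$-weight structure on $V$, should close the argument.
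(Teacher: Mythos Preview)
Your plan diverges substantially from the paper's proof and, as written, has real gaps in the hard cases.

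The paper does not do a case analysis on $\ker L$ beyond the single observation that $\ker(L)\cap\mathfrak{sl}_2(\C)=0$ (your argument for this is correct). From there the paper proceeds \emph{uniformly}: with $\Lh=L(\Lg)$ one has the vector space decomposition $\Ln\rtimes\Lh=\phi(\Lg)\dotplus\Lh$, and since both summands are homomorphic images of the perfect $\Lg$, the whole Lie algebra $\Ln\rtimes\Lh$ is perfect. Writing $\Ls=L(\mathfrak{sl}_2(\C))\cong\mathfrak{sl}_2(\C)$, the nilradical of $\Ln\rtimes\Lh$ is $\Lm=L(\La)\dotplus\Ln$, and by Proposition~\ref{2.1} the quotient $\Lm/[\Lm,\Lm]$ contains no trivial $\Ls$-submodule; hence neither does $\Ln/[\Ln,\Ln]$, so $\Ls\ltimes\Ln$ is itself a $9$-dimensional perfect Lie algebra. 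Now the classification in Proposition~\ref{2.4} forces $c(\Ln)\le 2$, and Proposition~4.2 of \cite{BU41} then says $\Lg$ carries a pre-Lie structure, contradicting Helmstetter's theorem since $\Lg$ is perfect.

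Your proposal misses this reduction entirely. In your ``$\ker L$ contains the radical'' case, the sketch (``decompose $V$\ldots compare\ldots contradict'') is too vague to be an argument; there is no obvious contradiction just from $L|_{\mathfrak{sl}_2}$ being injective and the radical acting trivially via $L$. In the injective case you explicitly defer to a pass through the classification of $6$-dimensional nilpotent Lie algebras and their derivation algebras, which is feasible in principle but laborious, and you do not carry it out. The paper's argument bypasses all of this: no case split on $\ker L$, no list of nilpotent $\Ln$, no inspection of $\Der(\Ln)$. The single structural fact you are missing is that perfectness of $\Ln\rtimes L(\Lg)$ forces $\Ls\ltimes\Ln$ to be perfect, which immediately pins $\Ln$ down via Proposition~\ref{2.4}.
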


\begin{proof}
Let $\Lg=\mathfrak{sl}_2(\C)\ltimes \La$ and sssume that there exists a PA-structure on $(\Lg,\Ln)$, with the
homomorphism $L\colon \Lg\ra \Der(\Ln)$ given as in Definition $\ref{pls}$.
Let $\phi \colon \Lg \hookrightarrow \Ln \rtimes \Der(\Ln)$ be the embedding defined by $x\mapsto (x,L(x))$.
We claim that $\ker(L) \cap \mathfrak{sl}_2(\C)=0$. Suppose that this intersection is nonzero. Since it is an ideal
in the simple Lie algebra $\mathfrak{sl}_2(\C)$, this implies that $\ker(L) \cap \mathfrak{sl}_2(\C)= \mathfrak{sl}_2(\C)$,
so that  $\mathfrak{sl}_2(\C)\subseteq \ker(L)$. In particular, we have $L(s)=0$ for all $s\in \mathfrak{sl}_n(\C)$.
By axiom $(1)$ in Definition $\ref{pls}$ it follows that
\[
[s,t]-\{s,t\}=s\cdot t-t\cdot s=0
\]
for all $s,t\in \mathfrak{sl}_2(\C)$. Hence $\Ln$ has a subalgebra isomorphic to $\mathfrak{sl}_2(\C)$. This is
impossible, because $\Ln$ is nilpotent, so that the claim follows.

We have shown in the proof of Theorem $3.3$ in \cite{BU73} that the  Lie algebra
$\Ln \rtimes \Lh$, with 
\[
\Lh=L(\Lg)\subseteq \Der(\Ln),
\]
has a direct vector space sum decomposition
\[ 
\Ln \rtimes \Lh =\phi(\Lg)\dotplus \Lh.
\] 
Since $\phi(\Lg)$ and $\Lh$ are homomorphic images of a perfect Lie algebra, they are perfect.
Hence $\Ln \rtimes \Lh$ is perfect. Let $\Ls=L(\mathfrak{sl}_2(\C))$ and $\Lr=L(\La)$. Then we have $\Lh=\Ls\ltimes \Lr$.
Since
$\ker(L)\cap \mathfrak{sl}_2(\C)= 0$, $\Ls$ is nonzero and semisimple. Hence $\dim (\Ls)=3$ and $\Ls\cong \mathfrak{sl}_2(\C)$.
So
\[
\Lh\ltimes \Ln=(\Ls\ltimes \Lr)\ltimes \Ln
\]
is perfect and has nilradical $\Lm=\Lr\dotplus \Ln$. By Proposition $\ref{2.1}$, $\Lm/[\Lm,\Lm]$ does not contain
the trivial $1$-dimensional $\Ls$-module $V(1)$. Since $\Ln\subseteq \Lm$, also $\Ln/[\Ln,\Ln]\subseteq \Lm/[\Lm,\Lm]$
does not contain the trivial $1$-dimensional $\Ls$-module $V(1)$. Hence $\Ls\ltimes \Ln$ is a perfect Lie algebra of dimension $9$.
By Proposition $\ref{2.4}$, the nilpotency class $c(\Ln)$ of $\Ln$ is at most $2$. Proposition $4.2$ of \cite{BU41} says, that
if $(\Lg,\Ln)$ admits a post-Lie algebra structure, and $c(\Ln)\le 2$, then $\Lg$ admits a pre-Lie algebra structure.
Since $\Lg$ is perfect, this is impossible by Corollary $21$ of \cite{HEL}.
\end{proof}

Let us again state the last result used in the proof, see also Proposition $3.3$ in \cite{BU51}. 

\begin{prop}
Let $(\Lg,\Ln)$ be a pair of Lie algebras, where $\Lg$ is perfect and $\Ln$ is $2$-step nilpotent. Then there exists
no PA-structure on $(\Lg,\Ln)$.  
\end{prop}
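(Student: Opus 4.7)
The plan is to argue by contradiction by chaining together exactly the two results that already appear at the very end of the proof of Proposition \ref{3.2}; no new machinery is needed, and in fact the statement is precisely the isolated concluding step of that earlier proof, stripped of the dimension hypothesis.

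First I would suppose, for contradiction, that a PA-structure $x\cdot y$ exists on the pair $(\Lg,\Ln)$ with $\Lg$ perfect and $\Ln$ nilpotent of class $c(\Ln)\le 2$. The key input is Proposition $4.2$ of \cite{BU41}, which promotes any post-Lie algebra structure on a pair $(\Lg,\Ln)$ with $c(\Ln)\le 2$ to a pre-Lie algebra structure on $\Lg$. Applied here, this produces a pre-Lie algebra structure on the perfect Lie algebra $\Lg$.

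The contradiction is then immediate from Corollary $21$ of \cite{HEL}, which states that a perfect Lie algebra admits no pre-Lie algebra structure (this is also exactly the input used in Proposition \ref{3.1} for the abelian case). This finishes the argument.

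The ``hard part'' is really only conceptual: one must observe that the last two sentences of the proof of Proposition \ref{3.2} nowhere use the specific form $\Lg=\Ls\Ll_2(\C)\ltimes \La$, nor the dimension $6$, nor Proposition \ref{2.4}; they rely solely on $c(\Ln)\le 2$ and on $\Lg$ being perfect. So the present proposition is the clean, abstract form of that concluding observation, and presents no real obstacle. It does, however, explain why the low-dimensional cases in Proposition \ref{3.2} reduce to a purely general argument: once the classification step forces the nilradical into class $\le 2$, the conclusion follows from this proposition.
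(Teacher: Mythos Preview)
Your proposal is correct and is exactly the approach the paper takes: the paper does not give an independent proof of this proposition, but explicitly presents it as a restatement of the final step of the proof of Proposition~\ref{3.2} (with a reference to Proposition~3.3 in \cite{BU51}), which is precisely the chain Proposition~4.2 of \cite{BU41} followed by Corollary~21 of \cite{HEL} that you wrote out.
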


In case $(c)$, we have proved the following result in Proposition $4.4$ of \cite{BU41}. 

\begin{prop}
There is no PA-structure on a pair $(\Lg,\Ln)$, where $\Lg$ is perfect and $\Ln$ is solvable non-nilpotent.
\end{prop}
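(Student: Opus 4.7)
The plan is to mimic the semidirect product construction used in the proof of Proposition~\ref{3.2} (originating in Theorem~$3.3$ of \cite{BU73}) and to extract a contradiction from the fact, recalled in Section~$2$, that the solvable radical of a perfect Lie algebra is nilpotent.

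Assume that $(\Lg,\Ln)$ admits a PA-structure with $\Lg$ perfect and $\Ln$ solvable. Let $L\colon\Lg\to\Der(\Ln)$ be the associated left-multiplication homomorphism, set $\Lh=L(\Lg)\subseteq\Der(\Ln)$, and consider the map
\[
\phi\colon\Lg\to\Ln\rtimes\Der(\Ln),\qquad x\mapsto(x,L(x)).
\]
The first step is to verify, using axioms \eqref{post1} and \eqref{post2}, that $\phi$ is an injective Lie algebra homomorphism and that $\Ln\rtimes\Lh=\phi(\Lg)\dotplus\Lh$ as a direct vector-space sum. This is routine and identical to the verification already carried out in the proof of Proposition~\ref{3.2}.

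The crucial observation is then that both $\phi(\Lg)\cong\Lg$ and $\Lh=L(\Lg)$ are homomorphic images of the perfect algebra $\Lg$, so both are perfect. Consequently,
\[
[\Ln\rtimes\Lh,\Ln\rtimes\Lh]\supseteq[\phi(\Lg),\phi(\Lg)]+[\Lh,\Lh]=\phi(\Lg)+\Lh=\Ln\rtimes\Lh,
\]
which shows that the semidirect product $\Ln\rtimes\Lh$ is itself perfect. It follows that $\rad(\Ln\rtimes\Lh)=\nil(\Ln\rtimes\Lh)$. But $\Ln$ sits as a solvable ideal of $\Ln\rtimes\Lh$, so $\Ln\subseteq\rad(\Ln\rtimes\Lh)=\nil(\Ln\rtimes\Lh)$; since a subalgebra of a nilpotent Lie algebra is nilpotent, $\Ln$ must itself be nilpotent, contradicting the assumption that it is solvable non-nilpotent.

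The main obstacle is essentially bookkeeping: one has to check carefully that $\phi$ is a Lie algebra homomorphism (which is where axioms \eqref{post1} and \eqref{post2} combine in just the right way) and that $\phi(\Lg)\cap\Lh=0$, so that the dimension count $\dim\phi(\Lg)+\dim\Lh=\dim\Ln+\dim\Lh=\dim(\Ln\rtimes\Lh)$ forces the claimed direct-sum decomposition. Once this is in place, the conclusion falls straight out of the principle ``perfect implies solvable radical is nilpotent'' applied to the perfect Lie algebra $\Ln\rtimes\Lh$.
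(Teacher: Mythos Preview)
Your argument is correct. In the paper itself this proposition is not proved but merely cited from Proposition~$4.4$ of \cite{BU41}, so there is no ``paper's own proof'' to compare against directly. What you have written is a clean self-contained proof that uses precisely the apparatus already assembled in this paper: the embedding $\phi\colon\Lg\hookrightarrow\Ln\rtimes\Der(\Ln)$, the decomposition $\Ln\rtimes\Lh=\phi(\Lg)\dotplus\Lh$ (both invoked in the proof of Proposition~\ref{3.2} and in the reductive case later on), together with the fact from Section~$2$ that $\rad=\nil$ for a perfect Lie algebra. The perfectness step $[\Ln\rtimes\Lh,\Ln\rtimes\Lh]\supseteq\phi(\Lg)+\Lh$ is exactly the argument the paper itself uses (and attributes to Lemma~$2.3$ of \cite{BU74}), so your route is entirely in the spirit of the surrounding text. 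The only cosmetic point is that the final clause ``subalgebra of a nilpotent Lie algebra is nilpotent'' could be replaced by the slightly sharper observation that $\Ln$ is in fact an ideal of $\Ln\rtimes\Lh$ contained in the nilradical, hence a nilpotent ideal; either way the conclusion is the same.
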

 
For case $(d)$ we start with low-dimensional simple Lie algebras $\Ln$. There is no pair
$(\Lg,\Ln)$ with $\Ln\cong  \mathfrak{sl}_2(\C)$ and $\Lg$ perfect non-semisimple, since the only perfect Lie
algebra in dimension $3$ is simple. The next case is to consider pairs $(\Lg,\Ln)$, where
$\Ln\cong \mathfrak{sl}_3(\C)$ and $\Lg$ is a perfect non-semisimple Lie algebra of dimension $8$.
We start with the following result.

\begin{lem}\label{3.4}
Let $i\colon \mathfrak{sl}_2(\C) \ltimes V(2) \hookrightarrow \mathfrak{sl}_3(\C)$ be an injective Lie algebra homomorphism.
By conjugating with a matrix in $GL_3(\C)$ we may assume that the image of $i$ is of the form
\[
\im (i)=\Biggl\{ \begin{pmatrix} a_1 & a_2 & a_3 \cr  a_4 & -a_1 & a_5 \cr  0 & 0 & 0 \end{pmatrix} \in \mathfrak{sl}_3(\C)
\Biggr\}, \text{ or }
\im (i)=\Biggl\{ \begin{pmatrix} b_1 & b_2 & 0 \cr  b_3 & -b_1 & 0 \cr  b_4 & b_5 & 0 \end{pmatrix} \in \mathfrak{sl}_3(\C)
\Biggr\} .
\]
\end{lem}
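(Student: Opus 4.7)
The plan is to analyze $i$ through the $\Ls$-module structure it induces on $\C^3$, where $\Ls := i(\mathfrak{sl}_2(\C))$ is a $3$-dimensional simple subalgebra of $\mathfrak{sl}_3(\C)$. Since $\C^3$ is an $\Ls$-module of dimension $3$, it must decompose as $V(3)$, $V(2)\oplus V(1)$, or $V(1)^{\oplus 3}$. The last option forces $\Ls=0$, contradicting injectivity of $i$, and the first case is excluded because the principal embedding of $\Ls$ gives
\[
\mathfrak{sl}_3(\C)\cong V(5)\oplus V(3)
\]
as $\Ls$-modules, which contains no $V(2)$-summand and hence no room for $i(V(2))$.

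Thus $\C^3\cong V(2)\oplus V(1)$ as $\Ls$-modules. Choosing a basis adapted to this splitting and to the weight spaces of $V(2)$, I would conjugate by a suitable $g\in GL_3(\C)$ so that $\Ls$ becomes the standard upper-left $2\times 2$ block $\mathfrak{sl}_2$. The adjoint $\Ls$-module structure on $\mathfrak{sl}_3(\C)$ then decomposes as $V(3)\oplus V(2)\oplus V(2)\oplus V(1)$, with the two copies of $V(2)$ being
\[
W_1 = \s\{E_{13}, E_{23}\} \quad\text{and}\quad W_2 = \s\{E_{31}, E_{32}\},
\]
namely the upper-right column and the lower-left row. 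By Schur's lemma, the $V(2)$-submodules of $W_1\oplus W_2$ form a pencil
\[
M_{[\al:\be]}=\s\{\al E_{13}+\be E_{32},\; \al E_{23}-\be E_{31}\}
\]
parametrized by $[\al:\be]\in\P^1$, with $W_1$ and $W_2$ the degenerate members at $\be=0$ and $\al=0$.

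The crucial last step uses that $i(V(2))$ must be abelian as a subalgebra of $\mathfrak{sl}_3(\C)$, since $V(2)$ is the abelian radical of $\mathfrak{sl}_2(\C)\ltimes V(2)$. A direct commutator calculation on the highest and lowest weight generators of $M_{[\al:\be]}$ yields
\[
[\al E_{13}+\be E_{32},\; \al E_{23}-\be E_{31}]=\al\be\,(2E_{33}-E_{11}-E_{22}),
\]
which vanishes if and only if $\al\be=0$. Therefore $i(V(2))=W_1$ or $i(V(2))=W_2$, giving exactly the two normal forms in the statement. The main obstacle is the bookkeeping required to correctly identify the pencil of $V(2)$-submodules of $W_1\oplus W_2$ by weight vectors; once this is set up, the Lie-algebraic abelian constraint immediately collapses the continuous $\P^1$-family of candidate $\Ls$-submodules down to just the two stated cases.
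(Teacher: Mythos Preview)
Your proof is correct and follows the same overall strategy as the paper: analyze $\C^3$ as an $\mathfrak{sl}_2(\C)$-module via $i$, rule out the principal case $\C^3\cong V(3)$, normalize the $\mathfrak{sl}_2(\C)$-block in the remaining case $\C^3\cong V(2)\oplus V(1)$, and then determine the possible images of $V(2)$. Where the paper simply writes ``a short computation'' and ``it is easy to see that this representation does not extend,'' you supply the details via the adjoint decompositions $\mathfrak{sl}_3(\C)\cong V(5)\oplus V(3)$ (principal case) and $\mathfrak{sl}_3(\C)\cong V(3)\oplus V(2)\oplus V(2)\oplus V(1)$ (block case), together with the Schur pencil and the abelian constraint; this is a clean, more conceptual way to execute exactly what the paper's proof sketches.
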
  

\begin{proof}
The vector space $\C^3$ becomes an $\mathfrak{sl}_2(\C)$-module via the embedding $i$ restricted to $\mathfrak{sl}_2(\C)$.
Hence either $\C^3\cong V(2)\oplus V(1)$, or  $\C^3\cong V(3)$ as $\mathfrak{sl}_2(\C)$-module. In the first case we can
choose a basis of $\C^3$ such that
\[
i(\mathfrak{sl}_2(\C)) = \Biggl\{ \begin{pmatrix} a_1 & a_2 & 0 \cr  a_3 & -a_1 & 0 \cr  0 & 0 & 0 \end{pmatrix}
\in \mathfrak{sl}_3(\C)\Biggr\},
\]  
by using the natural representation $i(e_1)=E_{12}$, $i(e_2)=E_{21}$ and $i(e_3)=E_{11}-E_{22}$ for the basis $(e_1,e_2,e_3)$
of $\mathfrak{sl}_2(\C)$. A short computation shows that when such a representation extends to
$\mathfrak{sl}_2(\C) \ltimes V(2)$, we obtain one of the forms for $\im (i)$ as described above. \\[0.2cm]
In the second case we may assume that $i(e_1)=E_{12}+2E_{23}$, $i(e_2)=E_{12}+2E_{23}$ and $i(e_3)=2E_{11}-2E_{33}$.
It is easy to see that this representation does not extend to one of $\mathfrak{sl}_2(\C) \ltimes V(2)$.
\end{proof}  

\begin{lem}\label{3.5}
Let $j\colon \mathfrak{sl}_2(\C) \hookrightarrow \mathfrak{sl}_3(\C)$ be an injective Lie algebra homomorphism.
Denote by $c_i\colon \mathfrak{gl}_3(\C)\ra \C^3$ the projection of a matrix in $\mathfrak{gl}_3(\C)$ to its $i$-th
column, and by $r_i\colon \mathfrak{gl}_3(\C)\ra \C^3$ the projection to its $i$-th row. Then none of the linear maps
$c_i \circ j, r_i \circ j \colon \mathfrak{sl}_2(\C)\ra \C^3$ for $i=1,2,3$ is bijective.
\end{lem}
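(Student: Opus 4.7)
My plan is to translate the conditions into statements about $\C^3$ as an $\mathfrak{sl}_2(\C)$-module via $j$ and then to apply the classification of finite-dimensional $\mathfrak{sl}_2(\C)$-modules. Let $f_1,f_2,f_3$ denote the standard basis of $\C^3$ and, for each $v\in\C^3$, consider the evaluation map
\[
L_v\colon \mathfrak{sl}_2(\C)\ra \C^3,\quad x\mapsto j(x)\cdot v.
\]
The $i$-th column of the matrix $j(x)$ equals $j(x)f_i$, so $c_i\circ j = L_{f_i}$, and bijectivity of $c_i\circ j$ is equivalent to bijectivity of $L_{f_i}$. I will prove the stronger assertion that $L_v$ is bijective for no $v\in\C^3$, which immediately handles the column part.

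Since $j$ is injective, the induced $\mathfrak{sl}_2(\C)$-module structure on $\C^3$ is faithful, so by the classification of finite-dimensional $\mathfrak{sl}_2(\C)$-modules only $\C^3\cong V(3)$ or $\C^3\cong V(2)\oplus V(1)$ is possible. In the decomposable case, write $v=v'+v''$ with $v'\in V(2)$ and $v''\in V(1)$; because $\mathfrak{sl}_2(\C)$ acts trivially on $V(1)$, the image of $L_v$ is contained in the $2$-dimensional summand $V(2)$, so $L_v$ cannot be surjective. In the irreducible case I invoke the fact that $V(3)$ is isomorphic to the adjoint representation of $\mathfrak{sl}_2(\C)$ and fix an $\mathfrak{sl}_2(\C)$-module isomorphism $\phi\colon \C^3\ra \mathfrak{sl}_2(\C)$. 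For nonzero $v\in\C^3$ the element $\phi(v)\in\mathfrak{sl}_2(\C)$ is nonzero and, since $\phi$ intertwines the actions,
\[
\phi\bigl(L_v(\phi(v))\bigr) = [\phi(v),\phi(v)] = 0,
\]
so $L_v(\phi(v))=0$ exhibits a nonzero element in $\ker L_v$; for $v=0$ the map $L_v$ is itself zero. Hence $L_v$ is never bijective, settling the claim for all $c_i\circ j$.

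For the row projections I pass to the dual representation. The $i$-th row of $j(x)$, viewed as a column vector, is $j(x)^{\mathrm{T}} f_i$, and $x\mapsto -j(x)^{\mathrm{T}}$ defines the contragredient representation $j^{\ast}\colon \mathfrak{sl}_2(\C)\ra \mathfrak{sl}_3(\C)$, which is still injective. Because every finite-dimensional irreducible $\mathfrak{sl}_2(\C)$-module is self-dual, the dual module $(\C^3)^{\ast}$ is again either $V(3)$ or $V(2)\oplus V(1)$, so the argument of the previous paragraph applied to $j^{\ast}$ shows that the map $x\mapsto j(x)^{\mathrm{T}} f_i$ is not bijective, and hence neither is $r_i\circ j$.

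The main potential obstacle is keeping the case distinction clean; the identification of $V(3)$ with the adjoint representation removes the need for a direct determinant computation in a weight basis, and the row case reduces to the column case via self-duality.
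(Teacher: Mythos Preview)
Your proof is correct and follows the same overall structure as the paper: you interpret $c_i\circ j$ as the evaluation map $x\mapsto j(x)\cdot f_i$ on the $\mathfrak{sl}_2(\C)$-module $\C^3$, show that this map always has nontrivial kernel, and then reduce the row case to the column case via the automorphism $X\mapsto -X^{\mathrm T}$ of $\mathfrak{sl}_3(\C)$.

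The one genuine difference is in how the key step---nontriviality of the annihilator of every vector---is established. The paper invokes a general lemma (Lemma~4.1 of \cite{BU44}) asserting that for a semisimple Lie algebra $\Ls$ acting on a module $M$ with $\dim M\le\dim\Ls$, every vector has nonzero annihilator; this ultimately rests on the nonexistence of pre-Lie structures on semisimple Lie algebras. You instead give a direct, self-contained argument via the classification of $3$-dimensional $\mathfrak{sl}_2(\C)$-modules: in the $V(2)\oplus V(1)$ case the image misses the trivial summand, and in the $V(3)\cong\mathrm{ad}$ case the element $\phi(v)$ annihilates $v$. Your route is more elementary and makes the specific structure visible; the paper's route is shorter and situates the fact in a broader framework. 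One minor remark: your appeal to self-duality in the row case is unnecessary, since once $j^\ast$ is seen to be injective the column argument applies to it verbatim.
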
  

\begin{proof}
It is enough to show the claim for columns. We obtain the result for rows by applying the isomorphism of
Lie algebras $\mathfrak{sl}_3(\C)\ra \mathfrak{sl}_3(\C)$, given by $X\mapsto -X^T$, to the result for columns.
We will give the proof for $c_3\circ j$. The other two cases are similar. Note that $\C^3$ is an
$\mathfrak{sl}_2(\C)$-module via $j$. Hence the map $c_i \circ j$ is actually the map
\[
\mathfrak{sl}_2(\C) \ra \C^3,\;x\mapsto x\cdot \begin{pmatrix} 0 \cr 0 \cr 1 \end{pmatrix}.
\]
Because of $\dim \mathfrak{sl}_2(\C) = \dim \C^3$ the annihilator of any vector is non-trivial by Lemma $4.1$ in \cite{BU44},
so that the map $c_3\cdot j$ is not injective.
\end{proof}  

\begin{lem}\label{3.6}
Let $\Ls_1$ and $\Ls_2$ be Lie algebras isomorphic to $\mathfrak{sl}_2(\C)$. Then the ideals
of the Lie algebra  $\Lg=\Ls_1\oplus (\Ls_2\ltimes V(2))$ are given by
\[
0,\, \Ls_1,\, \Ls_2\ltimes V(2),\,  \Ls_1\oplus V(2),\,  V(2),\,  \Lg.
\]  
\end{lem}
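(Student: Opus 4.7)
The plan is to describe an arbitrary ideal $I\subseteq\Lg$ via the two canonical projections $p_1\colon\Lg\to\Ls_1$ and $p_2\colon\Lg\to\Ls_2\ltimes V(2)$, which are surjective Lie algebra homomorphisms because the decomposition is a direct sum. Hence $p_1(I)$ and $p_2(I)$ are ideals of their respective codomains, and in addition $I\subseteq p_1(I)\oplus p_2(I)$ from the direct-sum structure. Since $\Ls_1$ is simple, $p_1(I)\in\{0,\Ls_1\}$.

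As a preliminary step I would classify the ideals of $\Ls_2\ltimes V(2)$. Given such an ideal $J$, the intersection $J\cap V(2)$ is an $\Ls_2$-submodule of the irreducible module $V(2)$, so it equals $0$ or $V(2)$. If $J\cap V(2)=V(2)$, then $J/V(2)$ is an ideal of the simple algebra $\Ls_2$, forcing $J\in\{V(2),\,\Ls_2\ltimes V(2)\}$. If $J\cap V(2)=0$, then for any $s+v\in J$ with $s\in\Ls_2$ and every $w\in V(2)$ we get $[w,s+v]=[w,s]\in J\cap V(2)=0$; faithfulness of the $\Ls_2$-action on $V(2)$ forces $s=0$, hence $J\subseteq V(2)$ and $J=0$. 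So the ideals of $\Ls_2\ltimes V(2)$ are exactly $0$, $V(2)$, and $\Ls_2\ltimes V(2)$.

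The main step is the observation that $[\Ls_1,\Ls_2\ltimes V(2)]=0$, so that $[\Ls_1,I]=[\Ls_1,p_1(I)]\subseteq I$ and $[\Ls_2\ltimes V(2),I]=[\Ls_2\ltimes V(2),p_2(I)]\subseteq I$. Combining this with the perfectness of each nonzero candidate ideal yields: if $p_1(I)=\Ls_1$ then $\Ls_1=[\Ls_1,\Ls_1]\subseteq I$; if $p_2(I)=V(2)$ then $V(2)=[\Ls_2,V(2)]\subseteq I$ by irreducibility; and if $p_2(I)=\Ls_2\ltimes V(2)$ then $\Ls_2\ltimes V(2)\subseteq I$ since $\Ls_2\ltimes V(2)$ is perfect. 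In every case the projected ideal is already contained in $I$, so together with $I\subseteq p_1(I)\oplus p_2(I)$ we obtain $I=p_1(I)\oplus p_2(I)$. Running through the six possibilities for the pair $(p_1(I),p_2(I))$ recovers precisely the six ideals listed, which completes the classification.

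I do not anticipate any serious obstacle. The one point needing care is to rule out a ``diagonal'' ideal $I$ whose projections are large but which meets neither factor properly; this is exactly what the perfectness arguments of the last paragraph exclude, since every nonzero projected ideal is forced to sit inside $I$.
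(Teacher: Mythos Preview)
Your proof is correct. Both you and the paper first determine the ideals of $\Ls_2\ltimes V(2)$ (you give the full argument, the paper only states the result), but from there the two arguments diverge.

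The paper splits on whether $\La\cap\Ls_1$ is zero. If nonzero, simplicity of $\Ls_1$ forces $\Ls_1\subseteq\La$, and one passes to the quotient $\Lg/\Ls_1\cong\Ls_2\ltimes V(2)$. If $\La\cap\Ls_1=0$, the paper shows directly that $\La\subseteq\Ls_2\ltimes V(2)$ by bracketing a hypothetical element $x_1+x_2$ (with $x_1\neq 0$) against a suitable $y\in\Ls_1$ to produce a nonzero element of $\Ls_1\cap\La$.

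Your route is more uniform: you project onto both direct summands and then use that each nonzero candidate for $p_i(I)$ is perfect (or, in the case $p_2(I)=V(2)$, is generated under $\Ls_2$), so that $[\Ls_i\text{-factor},I]=[\Ls_i\text{-factor},p_i(I)]$ already forces $p_i(I)\subseteq I$. This immediately gives $I=p_1(I)\oplus p_2(I)$ and rules out any diagonal ideal in one stroke. Your argument generalizes verbatim to any finite direct sum of perfect Lie algebras once the ideal lattice of each summand is known; the paper's argument is a bit more hands-on but avoids appealing to perfectness of $\Ls_2\ltimes V(2)$.
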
  

\begin{proof}
It is clear that all of these subspaces are ideals in $\Lg$. Conversely, assume that $\La$ is an ideal in $\Lg$.
If $\La\cap \Ls_1\neq 0$, then $\Ls_1\subseteq \La$ and $\La/\Ls_1$ is an ideal of
$\Lg/\Ls_1\cong \Ls_2\ltimes V(2)$. But the only ideals of $\Ls_2\ltimes V(2)$ are $0$, $V(2)$ and $\Ls_2\ltimes V(2)$.
So, if $\La\cap \Ls_1\neq 0$ then all ideals are given by $\Ls_1$, $\Ls_1\oplus V(2)$ and $\Lg$. \\[0.2cm]
Now suppose that $\La\cap \Ls_1= 0$. We claim that then $\La\subseteq \Ls_2\ltimes V(2)$. Indeed, suppose that
there exists an element $x$ in $\La\setminus ( \Ls_2\ltimes V(2))$. We can write $x=x_1+x_2$ with $x_1\in \Ls_1$
and $x_2\in \Ls_2\ltimes V(2)$, where $x_1\neq 0$. There exists a $y\in \Ls_1$ such that $[y,s_1]\neq 0$, so that
$0\neq [y,x]=[y,x_1+x_2]=[y,x_1]\in \Ls_1\cap \La$, which is a contradiction. Hence we have
$\La\subseteq \Ls_2\ltimes V(2)$, which leads to the ideals $0$, $V(2)$ and $\Ls_2\ltimes V(2)$.
\end{proof}  

\begin{lem}\label{3.7}
There is no direct vector space decomposition  $\mathfrak{sl}_3(\C)=\La\dotplus \Lb$ with subalgebras
$\La$ and $\Lb$ of $\mathfrak{sl}_3(\C)$ satisfying $\La\cong  \mathfrak{sl}_2(\C) \ltimes V(2)$ and
$\Lb\cong  \mathfrak{sl}_2(\C)$.
\end{lem}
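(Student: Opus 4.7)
The plan is to combine Lemmas \ref{3.4} and \ref{3.5} directly. A dimension count gives $\dim \mathfrak{sl}_3(\C) = 8 = 5 + 3$, so the decomposition is not ruled out a priori. Suppose then that such a decomposition $\mathfrak{sl}_3(\C) = \La \dotplus \Lb$ exists, and let $i\colon \mathfrak{sl}_2(\C) \ltimes V(2) \hookrightarrow \mathfrak{sl}_3(\C)$ and $j\colon \mathfrak{sl}_2(\C) \hookrightarrow \mathfrak{sl}_3(\C)$ be embeddings with images $\La$ and $\Lb$, respectively.

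By Lemma \ref{3.4}, after conjugating by a suitable element of $GL_3(\C)$, we may assume that $\La$ is either the subalgebra $\La_1$ of all matrices in $\mathfrak{sl}_3(\C)$ with vanishing third row, or the subalgebra $\La_2$ of all matrices with vanishing third column. Conjugation preserves the direct sum decomposition and the isomorphism types of $\La$ and $\Lb$, so we may carry out the argument in this normal form. A parameter count confirms $\dim \La_1 = \dim \La_2 = 5$, so $\La$ coincides with the full set of such matrices and not merely a proper subspace.

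Consider first the case $\La = \La_1$. Then
\[
\La \cap \Lb = \{ X \in \Lb : r_3(X) = 0\} = \ker(r_3 \circ j),
\]
under the isomorphism $j\colon \mathfrak{sl}_2(\C) \to \Lb$. Now $r_3 \circ j \colon \mathfrak{sl}_2(\C) \to \C^3$ is a linear map between vector spaces of the same dimension $3$, so it is injective if and only if it is bijective. By Lemma \ref{3.5}, $r_3 \circ j$ is not bijective, and hence not injective, so its kernel is nonzero. This contradicts the transversality $\La \cap \Lb = 0$.

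The case $\La = \La_2$ is entirely analogous, using $c_3 \circ j$ in place of $r_3 \circ j$ and invoking Lemma \ref{3.5} for the column projection. In both cases we reach a contradiction, so no such decomposition can exist. No step should pose a real obstacle: the conjugation reduction of Lemma \ref{3.4} does the structural work, and Lemma \ref{3.5} (which itself rests only on the nontriviality of annihilators of vectors under $\mathfrak{sl}_2(\C)$-representations on $\C^3$) immediately supplies the nonzero intersection.
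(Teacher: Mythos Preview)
Your proof is correct and follows essentially the same route as the paper: reduce $\La$ to one of the two normal forms via Lemma~\ref{3.4}, then use Lemma~\ref{3.5} to show the relevant row or column projection $r_3\circ j$ (resp.\ $c_3\circ j$) on $\Lb$ cannot be bijective. The only cosmetic difference is that you phrase the contradiction as a nonzero intersection $\La\cap\Lb=\ker(r_3\circ j)\neq 0$, whereas the paper phrases it as the failure of the required bijectivity of $r_3$ restricted to $\Lb$; these are the same statement via the dimension count you already note.
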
  

\begin{proof}
Assume that there is such a decomposition $\mathfrak{sl}_3(\C)=\La\dotplus \Lb$. Then after applying a base
change we may assume by Lemma $\ref{3.4}$ that 
\[
\La=\Biggl\{ \begin{pmatrix} a_1 & a_2 & a_3 \cr  a_4 & -a_1 & a_5 \cr  0 & 0 & 0 \end{pmatrix} \in \mathfrak{sl}_3(\C)
\Biggr\}, \text{ or }
\La=\Biggl\{ \begin{pmatrix} b_1 & b_2 & 0 \cr  b_3 & -b_1 & 0 \cr  b_4 & b_5 & 0 \end{pmatrix} \in \mathfrak{sl}_3(\C)
\Biggr\}
\]
As $\mathfrak{sl}_3(\C)$ is a direct vector space sum of $\La$ and $\Lb$ we must have
that the row projection map $r_3\colon \Lb\ra \C^3$ is bijective in the first case, and the column projection map
$c_3\colon \Lb\ra \C^3$ is bijective in the second case. However, by Lemma $\ref{3.5}$, this is impossible.
\end{proof}  

We can now apply these lemmas to PA-structures on pairs $(\Lg,\Ln)$ with $\Ln =\mathfrak{sl}_3(\C)$, where $\Lg$
has a levi subalgebra isomorphic to $\mathfrak{sl}_2(\C)\oplus \mathfrak{sl}_2(\C)$.

\begin{prop}\label{3.8}
Let $\Lg=\mathfrak{sl}_2(\C)\oplus (\mathfrak{sl}_2(\C)\ltimes V(2))$ and $\Ln =\mathfrak{sl}_3(\C)$. Then there is no
PA-structure on the pair $(\Lg,\Ln)$.
\end{prop}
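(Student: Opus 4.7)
The plan is to apply Proposition~\ref{2.6}. Since $\Ln=\mathfrak{sl}_3(\C)$ is semisimple, any PA-structure on $(\Lg,\Ln)$ would correspond to an injective Lie algebra homomorphism $\phi\colon \Lg\hookrightarrow \Ln\oplus \Ln$ with $(p_1-p_2)_{\mid \Lg}$ bijective. Set $j_i=p_i\circ \phi$ and $K_i=\ker(j_i)$; these are ideals of $\Lg$, and $K_1\cap K_2=\ker(\phi)=0$. Neither $K_i$ can vanish, for otherwise $j_i$ would be a Lie isomorphism $\Lg\to \mathfrak{sl}_3(\C)$, contradicting the fact that $\Lg$ is not semisimple (its solvable radical is $V(2)$).

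Writing $\Lg=\Ls_1\oplus (\Ls_2\ltimes V(2))$ with $\Ls_1,\Ls_2\cong \mathfrak{sl}_2(\C)$, Lemma~\ref{3.6} lists the non-zero ideals of $\Lg$ as $\Ls_1$, $\Ls_2\ltimes V(2)$, $V(2)$, $\Ls_1\oplus V(2)$, and $\Lg$. Checking intersections (every non-zero ideal except $\Ls_1$ contains $V(2)$, and $\Lg$ and $\Ls_1\oplus V(2)$ meet every non-zero ideal non-trivially) shows that the only unordered pairs of non-zero ideals with trivial intersection are $\{\Ls_1,V(2)\}$ and $\{\Ls_1,\Ls_2\ltimes V(2)\}$, leaving two cases.

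In the case $\{\Ls_1,V(2)\}$, up to relabeling take $K_1=V(2)$, so $j_1(\Lg)\cong \Lg/V(2)\cong \mathfrak{sl}_2(\C)\oplus \mathfrak{sl}_2(\C)$ would be a subalgebra of $\mathfrak{sl}_3(\C)$. For any non-zero $\mathfrak{sl}_2$-subalgebra $\La\subseteq \mathfrak{sl}_3(\C)$, decomposing $\mathfrak{sl}_3(\C)$ as an $\La$-module according to whether $\C^3$ restricts to $V(2)\oplus V(1)$ or to $V(3)$ gives $V(3)\oplus V(2)^{\oplus 2}\oplus V(1)$ or $V(5)\oplus V(3)$ respectively, so in both instances the trivial summand has multiplicity at most $1$. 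Hence the centralizer of $\La$ in $\mathfrak{sl}_3(\C)$ has dimension at most $1$ and contains no second $\mathfrak{sl}_2$-subalgebra, so this case is impossible.

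In the case $\{\Ls_1,\Ls_2\ltimes V(2)\}$, take $K_1=\Ls_1$ and $K_2=\Ls_2\ltimes V(2)$. Then $j_1(\Lg)\cong \mathfrak{sl}_2(\C)\ltimes V(2)$ and $j_2(\Lg)\cong \mathfrak{sl}_2(\C)$ are subalgebras of $\mathfrak{sl}_3(\C)$ of dimensions $5$ and $3$. Since $\Ls_1$ and $\Ls_2\ltimes V(2)$ commute and together span $\Lg$, for $x=x_1+x_2\in \Lg$ we compute $(p_1-p_2)\phi(x)=j_1(x_2)-j_2(x_1)$. Therefore $(p_1-p_2)_{\mid \Lg}$ is bijective if and only if $j_1(\Lg)\cap j_2(\Lg)=0$, and by counting dimensions this is equivalent to the direct vector-space decomposition $\mathfrak{sl}_3(\C)=j_1(\Lg)\dotplus j_2(\Lg)$. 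This is precisely the configuration forbidden by Lemma~\ref{3.7}, yielding the contradiction. The crux of the argument is this last case, which reduces cleanly to Lemma~\ref{3.7}; the other case is handled by the standard observation on semisimple subalgebras of $\mathfrak{sl}_3(\C)$.
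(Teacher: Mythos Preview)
Your proof is correct and follows essentially the same route as the paper's: apply Proposition~\ref{2.6}, use Lemma~\ref{3.6} to restrict the possible kernels, and reduce the crucial case to the decomposition forbidden by Lemma~\ref{3.7}. The differences are organizational rather than substantive. By considering the unordered pair $\{\ker(j_1),\ker(j_2)\}$ of non-zero ideals with trivial intersection you cut the case analysis down to two cases, whereas the paper runs through all six possibilities for $\ker(j_1)$ separately (its Cases 4 and 6 are absorbed into your pairwise-intersection check, and its Cases 5 and 6 are symmetric). For the case corresponding to $\ker(j_i)=V(2)$, the paper cites the result from \cite{BU29} that the minimal faithful representation of $\mathfrak{sl}_2(\C)\oplus \mathfrak{sl}_2(\C)$ has dimension $4$, while you give a self-contained argument via the $\mathfrak{sl}_2$-module decomposition of $\mathfrak{sl}_3(\C)$; both arguments establish the same fact, namely that $\mathfrak{sl}_2(\C)\oplus \mathfrak{sl}_2(\C)$ does not embed in $\mathfrak{sl}_3(\C)$.
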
  

\begin{proof}
Assume that there exists a  PA-structure on $(\Lg,\Ln)$ with $\Ln =\mathfrak{sl}_3(\C)$. Let us write
$\Lg=\Ls_1\oplus (\Ls_2\ltimes V(2))$. By Proposition $\ref{2.6}$ there exists an injective Lie algebra homomorphism
$j\colon \Lg\ra \Ln\oplus \Ln$, $x\mapsto (j_1(x),j_2(x))$ such that $j_1-j_2$ is a bijective linear map. We will examine
the possible kernels of $j_1$. Since $\ker(j_1)$ is an ideal in $\Lg$, it must be one of the six possibilities given
in Lemma $\ref{3.6}$. \\[0.2cm]
{\em Case 1:} $\ker(j_1)=0$. Then $j_1\colon \Lg\ra \Ln$ is an isomorphism. Since $\Lg$ is not semisimple, this
is a contradiction. \\[0.2cm]
{\em Case 2:}  $\ker(j_1)=\Lg$. Then $j_1$ is the zero map. Since $j_1-j_2$ has to be bijective, $j_2\colon \Lg\ra \Ln$
is an isomorphism. This is impossible. \\[0.2cm]
{\em Case 3:}  $\ker(j_1)=V(2)$. Then the representation 
\[
j_1\mid_{\Ls_1\oplus \Ls_2}\colon \Ls_1\oplus \Ls_2 \ra \mathfrak{sl}_3(\C)
\]
is faithful. However, the smallest dimension of a faithful representation of $\Ls_1\oplus \Ls_2$ is equal to $4$,
see \cite{BU29}, Proposition $2.5$. This is a contradiction.  \\[0.2cm]
{\em Case 4:}  $\ker(j_1)=\Ls_1\oplus V(2)$. Then
\[
j_2 \mid_{\Ls_1\oplus V(2)} = (j_2-j_1)\mid_{\Ls_1\oplus V(2)} \colon \Ls_1\oplus V(2)\ra \mathfrak{sl}_3(\C)
\]
has to be injective. However, $\Ls_1\oplus V(2)$ contains a $3$-dimensional abelian subalgebra, whereas $\mathfrak{sl}_3(\C)$
does not. This is a contradiction. \\[0.2cm]
{\em Case 5:}  $\ker(j_1)=\Ls_2\ltimes V(2)$. Then $(\Ls_2\ltimes V(2)) \cap \ker(j_2)=0$. Since $\ker(j_2)$ is an ideal, which is
nonzero as in case $1$, we have $\ker(j_2)=\Ls_1$. So for every $x\in \Lg$ we can write $x=x_1+x_2$ with $x_1\in \Ls_1$ and
$x_2\in \Ls_2\ltimes V(2)$. Then we have $(j_1-j_2)(x)=j_1(x_1)-j_2(x_2)$, and $j_1-j_2$ is injective if and only if
$\im(j_1)\cap \im(j_2)=0$. This is equivalent to
\[
\mathfrak{sl}_3(\C)=\im (j_1)\dotplus \im(j_2) 
\]
with $\im(j_1)\cong \Ls_1$ and $\im(j_2)\cong \Ls_2\ltimes V(2)$, which is a contradiction to Lemma $\ref{3.7}$. \\[0.2cm]
{\em Case 6:}  $\ker(j_1)=\Ls_1$. Then $\Ls_1$ is not contained in $\ker(j_2)$. So from the six possibilities for the ideal
$\ker(j_2)$, there are left $0$, $V(2)$ and $\Ls_2\ltimes V(2)$. We already know that $\ker(j_2)$ must be nonzero. Also,
$\ker(j_2)=V(2)$ leads to a contradiction as in case $3$. Finally $\ker(j_2)=\Ls\ltimes V(2)$ and $\ker(j_1)=\Ls_1$ is exactly
the symmetric situation to case $5$, and so also leads to a contradiction.
\end{proof}

Now we can prove the following result.

\begin{thm}\label{3.9}
Let $(\Lg,\Ln)$ be a pair of Lie algebras, where $\Lg$ is perfect non-semisimple and $\Ln =\mathfrak{sl}_3(\C)$. Then there is no
PA-structure on $(\Lg,\Ln)$.
\end{thm}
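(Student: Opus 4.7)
The plan is to invoke Proposition~\ref{2.6} to translate the existence of a PA-structure on $(\Lg,\Ln)$ with $\Ln=\mathfrak{sl}_3(\C)$ into the existence of an injective Lie algebra homomorphism $\phi=(j_1,j_2)\colon \Lg\hookrightarrow \mathfrak{sl}_3(\C)\oplus \mathfrak{sl}_3(\C)$ such that $j_1-j_2\colon \Lg\ra \mathfrak{sl}_3(\C)$ is bijective. This forces $\dim \Lg=8$, so Proposition~\ref{2.3} limits $\Lg$ to seven candidates. The case $\Lg=\mathfrak{sl}_2(\C)\oplus (\mathfrak{sl}_2(\C)\ltimes V(2))$ is already handled by Proposition~\ref{3.8}, so it remains to rule out the other six algebras, each of which has the form $\Lg=\Ls\ltimes \Lr$ with $\Ls\cong \mathfrak{sl}_2(\C)$ a Levi subalgebra and $\Lr=\nil(\Lg)$ of dimension $5$.

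The central structural input I would establish is that, up to conjugation, the only perfect subalgebras of $\mathfrak{sl}_3(\C)$ are $0$, the two conjugacy classes of $\mathfrak{sl}_2(\C)$ (both of dimension $3$), the $5$-dimensional subalgebra $\mathfrak{sl}_2(\C)\ltimes V(2)$ appearing in Lemma~\ref{3.4}, and $\mathfrak{sl}_3(\C)$ itself. Indeed, a perfect non-semisimple subalgebra $\Lh=\Lu\ltimes \Lm$ of $\mathfrak{sl}_3(\C)$ must have Levi $\Lu\cong \mathfrak{sl}_2(\C)$ and nontrivial nilpotent radical $\Lm$ which is both a subalgebra of $\mathfrak{sl}_3(\C)$ and a $\Lu$-submodule. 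Any nilpotent subalgebra of $\mathfrak{sl}_3(\C)$ has dimension at most $3$, with equality forcing it to be a Heisenberg algebra conjugate to the strictly upper triangular matrices; but in that case the normalizer is the solvable Borel, leaving no room for an $\mathfrak{sl}_2(\C)$-action. Thus $\dim \Lm=2$ and $\Lm$ is abelian, and Proposition~\ref{2.1} together with perfectness of $\Lh$ forces $\Lm\cong V(2)$.

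With this classification in hand, the quotient $j_i(\Lg)$ is perfect, so $\dim j_i(\Lg)\in\{0,3,5,8\}$. The extreme values $0$ and $8$ are impossible: $\ker(j_i)=\Lg$ forces $j_{3-i}$ to be an isomorphism $\Lg\ra \mathfrak{sl}_3(\C)$, and $\ker(j_i)=0$ forces $\Lg\cong \mathfrak{sl}_3(\C)$, both contradicting that $\Lg$ is non-semisimple. Hence $\dim j_i(\Lg)\in\{3,5\}$. My next step would be to prove $\ker(j_i)\subseteq \Lr$ for each $i$: if the projection of $\ker(j_i)$ onto $\Ls=\Lg/\Lr$ were surjective, then by Levi-Malcev the Levi of $\Lg$ can be chosen inside $\ker(j_i)$, giving $\ker(j_i)=\Ls\oplus \CI$ with $\CI$ an $\Ls$-stable ideal of $\Lr$; the quotient $\Lg/\ker(j_i)\cong \Lr/\CI$ would then be nilpotent, which is incompatible with $j_i(\Lg)$ being isomorphic to $\mathfrak{sl}_2(\C)$ or $\mathfrak{sl}_2(\C)\ltimes V(2)$.

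The contradiction is then sealed by a dimension count inside $\Lr$: if $\dim j_i(\Lg)=3$ then $\ker(j_i)=\Lr$, and if $\dim j_i(\Lg)=5$ then $\ker(j_i)$ is a $3$-dimensional $\Ls$-stable ideal of $\Lr$. In every combination, either one of $\ker(j_1),\ker(j_2)$ equals $\Lr$ and contains the other, or both are $3$-dimensional subspaces of $\Lr$ with $\dim \ker(j_1)+\dim \ker(j_2)=6>5=\dim \Lr$, forcing a nontrivial intersection. Both outcomes violate the injectivity of $\phi$. The main obstacle in this plan is the classification of perfect subalgebras of $\mathfrak{sl}_3(\C)$, particularly the exclusion of $\mathfrak{sl}_2(\C)\ltimes \Ln_3(\C)$; once that is in hand, the rest is clean ideal-theoretic bookkeeping.
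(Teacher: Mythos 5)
Your proposal is correct in outline but takes a genuinely different route from the paper. The paper, after reducing to the five algebras $\Lg=\mathfrak{sl}_2(\C)\ltimes\Lr$ with $\Lr\in\{V(5),\,V(2)\oplus V(3),\,V(2)\oplus\Ln_3(\C),\,\Ln_5(\C),\,\Lf_{2,3}(\C)\}$, treats each nilradical separately: it enumerates the possible kernels of $j_1$ as $\mathfrak{sl}_2(\C)$-submodules and repeatedly invokes the fact that $\mathfrak{sl}_3(\C)$ contains no $3$-dimensional abelian subalgebra. You replace all of this by a single structural input --- the classification of perfect subalgebras of $\mathfrak{sl}_3(\C)$, which forces $\dim j_i(\Lg)\in\{0,3,5,8\}$ --- followed by the observation that both kernels are ideals contained in the $5$-dimensional radical $\Lr$ with $\dim\ker(j_1)+\dim\ker(j_2)\ge 6$, so they meet nontrivially, contradicting injectivity of $\phi$. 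This is more uniform (no case split over the five radicals) and isolates where the hypothesis $\Ln=\mathfrak{sl}_3(\C)$ enters; the paper's argument, by contrast, uses only the cruder fact about abelian subalgebras and module decompositions, which is less machinery but more bookkeeping. Both proofs correctly defer $\Lg=\mathfrak{sl}_2(\C)\oplus(\mathfrak{sl}_2(\C)\ltimes V(2))$ to Proposition~\ref{3.8}, and indeed your dimension count would fail there since the kernels need not lie in the ($2$-dimensional) radical.

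One step needs shoring up: the assertion that ``any nilpotent subalgebra of $\mathfrak{sl}_3(\C)$ has dimension at most $3$, with equality forcing conjugacy to the strictly upper triangular matrices'' is not justified as stated, since a nilpotent subalgebra may contain semisimple elements (e.g.\ a Cartan subalgebra). The correct route is to note that for a perfect subalgebra $\Lh\subseteq\mathfrak{sl}_3(\C)$ one has $\rad(\Lh)=\rad(\Lh)\cap[\Lh,\Lh]$, which acts nilpotently on $\C^3$ in any finite-dimensional representation; Engel's theorem then conjugates $\Lm=\rad(\Lh)$ into the strictly upper triangular matrices, giving $\dim\Lm\le 3$ and, in case of equality, that $\Lm$ is the full Heisenberg with solvable normalizer (the Borel). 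You should also record the exclusion of $\dim\Lm=1$ (a $1$-dimensional module over $\mathfrak{sl}_2(\C)$ is trivial, contradicting Proposition~\ref{2.1}) before concluding $\dim\Lm=2$ and $\Lm\cong V(2)$. With these two points filled in, the argument is complete.
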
  

\begin{proof}
Denote by $\Ls$ a Levi subalgebra of $\Lg$. Then either $\Ls\cong \mathfrak{sl}_2(\C)\oplus \mathfrak{sl}_2(\C)$ or
$\Ls\cong \mathfrak{sl}_2(\C)$. In the first case we have $\Lg \cong \Ls\Ll_2(\C)\oplus (\Ls\Ll_2(\C)\ltimes V(2))$ by
Proposition $\ref{2.3}$. Then the claim follows by Proposition $\ref{3.8}$. In the second case, by Proposition  $\ref{2.3}$,
$\Lg \cong \mathfrak{sl}_2(\C)\ltimes \Lr$, where $\Lr$ is isomorphic to one of the following five Lie algebras
\[
V(5),\, V(2)\oplus V(3),\, V(2)\oplus \Ln_3(\C),\, \Ln_5(\C),\, \Lf_{2,3}(\C).
\]
Again we are using the maps $j_i\colon \Lg\ra \Ln$, and assume that $j_1-j_2$ is bijective. 
In the first two cases, either $j_1$ or $j_2$ must be injective on the factor $V(3)$, respectively $V(5)$. This contradicts
the fact that $\mathfrak{sl}_3(\C)$ does not contain an abelian subalgebra of dimension $n\ge 3$. \\[0.2cm]
Assume that $\Lr \cong V(2)\oplus \Ln_3(\C)$. We will look again at the possibilities for $\ker(j_1)$. If it has a non-trivial
Levi factor, then $\ker(j_1)=\Lg$, so that $j_2\colon \Lg\ra \Ln$ is bijective. This is a contradiction. Hence we may assume 
that $\ker(j_1)$ is solvable, so that it is contained in $V(2)\oplus \Ln_3(\C)$. Here we can view $\ker(j_1)$ both as a
subalgebra and as a submodule of $V(2)\oplus \Ln_3(\C)$. As an $\mathfrak{sl}_2(\C)$-submodule, $ V(2)\oplus \Ln_3(\C)$ is
isomorphic to $V(2)\oplus V(2)\oplus V(1)$, because $\Ln_3(\C)\cong V(2)\oplus V(1)$, see Example $\ref{2.2}$. The submodule
$V(2)\oplus V(2)$ cannot occur as an ideal, since there is no subalgebra corresponding to it. So we have the following
possibilities for $\ker(j_1)$ as a submodule: \\[0.2cm]
{\em Case 1:} $\ker(j_1)=0$. Then $j_1$ is an isomorphism. This is a contradiction. \\[0.2cm]
{\em Case 2:} $\ker(j_1)\cong V(2)$. Then $j_1\mid_{\mathfrak{sl}_2(\C)\ltimes \Ln_3(\C)}$ is injective. This is impossible,
because $\mathfrak{sl}_2(\C)\ltimes \Ln_3(\C)$ has a $3$-dimensional abelian subalgebra, see Example $\ref{2.2}$, but
$\mathfrak{sl}_3(\C)$ does not have one. \\[0.2cm]
{\em Case 3:} $\ker(j_1)\cong \Ln_3(\C)$. Since $\ker(j_1)\cap \ker(j_2)=0$, it follows that $j_2$ induces
an injective homomorphism
$\mathfrak{sl}_2(\C)\ltimes \ker(j_1) \hookrightarrow \mathfrak{sl}_3(\C)$, which is impossible as in case $2$. \\[0.2cm]
{\em Case 4:} $\ker(j_1)\cong V(2)\oplus V(1)$, with $V(1)\cong Z(\Ln_3(\C))$. Then $j_2$ is injective on $\ker(j_1)$,
which is impossible, because  $V(2)\oplus V(1)$ is an abelian subalgebra of dimension $3$. \\[0.2cm]
{\em Case 5:} $\ker(j_1)\cong V(2)\oplus \Ln_3(\C)$. Then $j_2\mid_{V(2)\oplus \Ln_3(\C)}$ is injective. This is impossible,
because $V(2)\oplus \Ln_3(\C)$ has a $3$-dimensional abelian subalgebra, but $\mathfrak{sl}_3(\C)$ does not have one. \\[0.2cm]
Assume that $\Lr \cong \Ln_5(\C)$. We claim that at least one of the maps $j_1,j_2$ must be injective on $\Ln_5(\C)$.
Otherwise $j_1(Z(\Ln_5(\C))=j_2(Z(\Ln_5(\C))=0$, so that $(j_1-j_2)\mid _{Z(\Ln_5(\C))}=0$, which is a contradiction to the fact that
$j_1-j_2$ is bijective. So we may assume that $j_1$ or $j_2$ is injective. This is impossible since $\Ln_5(\C)$ contains
a $3$-dimensional abelian subalgebra, but $\mathfrak{sl}_3(\C)$ does not have one. \\[0.2cm]
Finally assume that $\Lr \cong \Lf_{2,3}(\C)$. Then again at least one of the maps $j_1,j_2$ must be injective on $\Lr$.
If $j_1$ is not injective on $\Lr$, then $Z(\Lr)\cap \ker(j_1)\neq 0$. We claim that
$Z(\Lr)\subseteq \ker(j_1)$. In fact, every ideal $\La$ of $\Lg$ satisfying $\La\cap Z(\Lr) \neq 0$ also satisfies
$Z(\Lr)\subseteq \La$. To see this, note that
that $Z(\Lr)=[\Lr, [\Lr,\Lr]]$, and that the action of
$\mathfrak{sl}_2(\C)$ on $[\Lr,\Lr]/[\Lr, [\Lr,\Lr]]$ is trivial, since the quotient is $1$-dimensional. It follows
that the action of $\mathfrak{sl}_2(\C)$ on $Z(\Lr)$ coincides with the action on $\Lr/[\Lr,\Lr]$. By Proposition $\ref{2.1}$ it has
no trivial $1$-dimensional submodule, since $\Lg$ is perfect. This also implies that $Z(\Lr)$ has no trivial
$1$-dimensional submodule, so that $\dim (\La\cap Z(\Lr))\ge 2$. Since $\dim Z(\Lr)=2$ it follows that $Z(\Lr)\subseteq \La$. 
Hence if both $j_1$ and $j_2$ are not injective on $\Lr$, the center $Z(\Lr)$ is contained in $\ker(j_1)$ and $\ker(j_2)$, so that
$(j_1-j_2)\mid_{Z(\Lr)}=0$. This is a contradiction. Consequently, $j_i\colon \Lf_{2,3}(\C) \ra \mathfrak{sl}_3(\C)$
is an injection for some $i$, contradicting the fact that $\Lf_{2,3}(\C)$ has a $3$-dimensional abelian subalgebra, but
$\mathfrak{sl}_3(\C)$ does not have one.
\end{proof}  

It is not clear how to generalize this proof for other simple Lie algebras $\Ln$. \\[0.2cm]
For case $(e)$ we can prove the following general result.

\begin{prop}\label{3.10}
Let $(\Lg,\Ln)$ be a pair of Lie algebras, where $\Lg$ is perfect non-semisimple and $\Ln$ is semisimple. Assume that
we have a Levi decomposition $\Lg=\Ls \ltimes V$, where $\Ls$ is a simple subalgebra and $V$ is an irreducible $\Ls$-module,
considered as abelian Lie algebra. Then there is no PA-structure on $(\Lg,\Ln)$.
\end{prop}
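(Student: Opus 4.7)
The plan is to invoke Proposition~\ref{2.6}, which translates the existence of a PA-structure on $(\Lg,\Ln)$ into the existence of an injective Lie algebra homomorphism $\phi\colon \Lg \hookrightarrow \Ln \oplus \Ln$ such that $j_1-j_2$ is bijective, where $j_i=p_i\circ\phi$ for $i=1,2$. The strategy is then to classify the ideals of $\Lg$ and run a short case analysis on $\ker(j_1)$ and $\ker(j_2)$.

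The first (and main) step is to show that the only ideals of $\Lg = \Ls \ltimes V$ are $0$, $V$ and $\Lg$. Let $I$ be an ideal of $\Lg$. Then $I\cap V$ is an $\Ls$-submodule of the irreducible module $V$, hence $I\cap V\in\{0,V\}$. If $V\subseteq I$, then $I/V$ is an ideal of $\Lg/V\cong\Ls$, so by simplicity $I\in\{V,\Lg\}$. If $I\cap V=0$, let $\pi\colon \Lg\to\Ls$ denote the canonical projection. Then $\pi(I)$ is an ideal of the simple algebra $\Ls$. If $\pi(I)=0$, then $I\subseteq V$ and so $I=0$. If $\pi(I)=\Ls$, then $I=\{s+f(s)\mid s\in\Ls\}$ is the graph of a linear map $f\colon \Ls\to V$, and for any $w\in V$ and $s\in\Ls$ the ideal condition yields $[w,s+f(s)]=-s\cdot w\in I\cap V=0$. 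Hence $\Ls$ acts trivially on $V$; but since $\Lg$ is perfect and $V=V/[V,V]$ (as $V$ is abelian), Proposition~\ref{2.1} forbids $V$ from containing a trivial $1$-dimensional $\Ls$-submodule, a contradiction as $V\neq 0$.

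The second step is the case analysis. Since $\phi$ is injective, $\ker(j_1)\cap\ker(j_2)=0$, and each $\ker(j_i)$ is an ideal of $\Lg$, hence lies in $\{0,V,\Lg\}$. If $\ker(j_i)=\Lg$ for some $i$, then $j_i=0$, so $\pm(j_1-j_2)=j_{3-i}$ is a Lie algebra isomorphism $\Lg\to\Ln$, contradicting the fact that $\Lg$ is non-semisimple while $\Ln$ is semisimple. If $\ker(j_i)=0$ for some $i$, then $j_i$ is injective, so $\dim\Lg\le\dim\Ln$; combined with the bijectivity of $j_1-j_2$ this forces $\dim\Lg=\dim\Ln$, so $j_i$ is a Lie algebra isomorphism $\Lg\to\Ln$, again a contradiction. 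The only remaining possibility is $\ker(j_1)=\ker(j_2)=V$, but then $V\subseteq\ker(\phi)=0$, contradicting $V\neq 0$.

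The main obstacle is the classification of the ideals of $\Lg=\Ls\ltimes V$, and within it the ``graph'' case; this is the one point where the perfectness hypothesis (through Proposition~\ref{2.1}) enters in an essential way. Once the ideal classification is in place, the case analysis on $\ker(j_1)$ and $\ker(j_2)$ is immediate.
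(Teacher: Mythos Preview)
Your proof is correct but follows a different route from the paper's. The paper invokes Proposition~\ref{2.7} (semisimple, hence complete, $\Ln$ gives a Rota--Baxter operator $R$ of weight $1$) rather than Proposition~\ref{2.6}. Since $\Lg\not\cong\Ln$, both $\ker(R)$ and $\ker(R+\id)$ are nonzero ideals of $\Lg$ with trivial intersection; once one knows that the only ideals of $\Lg$ are $0$, $V$, $\Lg$, this is immediately contradictory (both kernels would have to equal $V$). Your approach via $j_1,j_2$ reaches the same contradiction but through a slightly longer case analysis. On the ideal classification itself, the paper simply asserts that any ideal $\La$ satisfies $\La=(\La\cap\Ls)\ltimes(\La\cap V)$ and then argues with simplicity of $\Ls$ and irreducibility of $V$; your treatment is more explicit, handling the ``graph'' case directly and using Proposition~\ref{2.1} to rule it out. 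Both arguments are valid; the paper's endgame is a little slicker, while your ideal classification is more self-contained. One small remark: since $\Lg$ and $\Ln$ share the same underlying vector space by definition of a PA-structure, $\dim\Lg=\dim\Ln$ holds from the outset, so the dimension comparison in your $\ker(j_i)=0$ case is unnecessary (though not wrong).
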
  

\begin{proof}
Suppose that there exists a PA-structure on $(\Lg,\Ln)$. Then by Proposition $\ref{2.7}$ it is of the form $x\cdot y=\{R(x),y\}$
for a Rota-Baxter operator $R\colon \Lg\ra \Ln$ of weight $1$ on $\Ln$. Moreover, since $\Lg$ and $\Ln$ are not isomorphic,
both $\ker(R)$ and $\ker(R+\id)$ are nonzero ideals of $\Lg$ with
\[
\ker(R)\cap \ker(R+\id)=0.
\]  
We will show that the only ideals of $\Lg=\Ls \ltimes V$ are $0,V,\Lg$. Then it is clear that $V$ is contained in the
above intersection, so that $V=0$. This is a contradiction. So let $\La$ be an ideal of $\Lg$. Then we obtain a Levi decomposition
for $\La$ by
\[
\La=(\La\cap \Ls)\ltimes (\La\cap V).
\]
Since $\La\cap \Ls$ is an ideal in $\Ls$, and $\Ls$ is simple, we have either $\La\cap \Ls=0$ or $\La\cap \Ls=\Ls$.
Also, since $\La\cap V$ is an $\Ls$-submodule of $V$ and $V$ is irreducible, we have either $\La\cap V=0$ or $\La\cap V=V$.\\[0.2cm]
{\em Case 1:} $\La\cap V=0$. Then $\La=\La\cap \Ls$ is either zero or $\Ls$. It follows that either $\La=0$ or $\La=\Ls$.
But $\Ls$ is not an ideal in $\Lg$, so that we obtain $\La=0$. \\[0.2cm]
{\em Case 2:} $\La\cap V=V$. Then $\La=(\La\cap \Ls)\ltimes V$, which means either $\La=V$ or $\La=\Lg$. \\[0.2cm]
So we obtain $V=0$ and hence a contradiction.
\end{proof}  

For $\Ln =\mathfrak{sl}_2(\C)\oplus \mathfrak{sl}_2(\C)$ we obtain a further result for case $(e)$.
In Theorem $4.1$ of \cite{BU59} we have classified all Lie algebras $\Lg$, such that the pair $(\Lg,\Ln)$ with
$\Ln =\mathfrak{sl}_2(\C)\oplus \mathfrak{sl}_2(\C)$ admits a PA-structure. Here we have used the theory of Rota-Baxter
operators. It is easy to see that none of the eight cases for $\Lg$ in this classification yields a perfect, non-semisimple
Lie algebra. Hence we obtain the following result.

\begin{prop}
Let $(\Lg,\Ln)$ be a pair of Lie algebras, where $\Lg$ is perfect non-semisimple and $\Ln =\mathfrak{sl}_2(\C)\oplus \mathfrak{sl}_2(\C)$. Then there is no PA-structure on $(\Lg,\Ln)$.
\end{prop}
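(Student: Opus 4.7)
The plan is to reduce the claim to checking a short list of six-dimensional perfect non-semisimple Lie algebras and to rule out each one directly, so that the argument does not need to rely on the full classification in Theorem $4.1$ of \cite{BU59}.

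First, the existence of a PA-structure forces $\dim \Lg = \dim \Ln = 6$, so by Proposition $\ref{2.3}$ the only candidates for $\Lg$ are $\mathfrak{sl}_2(\C) \ltimes V(3)$ and $\mathfrak{sl}_2(\C) \ltimes \Ln_3(\C)$. The first algebra has the form $\Ls \ltimes V$ with $\Ls$ simple and $V$ an irreducible $\Ls$-module regarded as an abelian Lie algebra, so Proposition $\ref{3.10}$ rules it out immediately.

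For $\Lg = \mathfrak{sl}_2(\C) \ltimes \Ln_3(\C)$ I would invoke Proposition $\ref{2.6}$ and analyze the pair of Lie algebra homomorphisms $j_1, j_2 \colon \Lg \to \Ln$ coming from the injective $\phi \colon \Lg \hookrightarrow \Ln \oplus \Ln$ with $j_1 - j_2$ bijective. A short preliminary step identifies the ideals of $\Lg$: decomposing $\Lg \cong V(3) \oplus V(2) \oplus V(1)$ as $\mathfrak{sl}_2(\C)$-module and using that $V(2)\subset \Ln_3(\C)$ is not a subalgebra while $V(3)=\mathfrak{sl}_2(\C)$ is not an ideal, one finds that the only ideals of $\Lg$ are $0$, $Z(\Lg)=V(1)$, $\Ln_3(\C)$, and $\Lg$. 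Since $\ker(j_i)$ must be one of these four ideals, there are only a few cases to eliminate. The extremes $\ker(j_1) \in \{0, \Lg\}$ force $j_1$ or $j_2$ to be an isomorphism $\Lg \cong \Ln$, contradicting non-semisimplicity of $\Lg$. The case $\ker(j_1) = Z(\Lg)$ yields an embedding $\mathfrak{sl}_2(\C) \ltimes V(2) \hookrightarrow \mathfrak{sl}_2(\C) \oplus \mathfrak{sl}_2(\C)$, which is impossible because every subalgebra $\mathfrak{sl}_2(\C) \hookrightarrow \mathfrak{sl}_2(\C) \oplus \mathfrak{sl}_2(\C)$ is either a factor or a (possibly twisted) diagonal, so its adjoint action on $\Ln$ decomposes only into copies of $V(3)$ and $V(1)$, leaving no room for a $V(2)$-summand to serve as the abelian radical. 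Finally, $\ker(j_1) = \Ln_3(\C)$ combined with bijectivity of $j_1 - j_2$ forces $j_2$ to be injective on $\Ln_3(\C)$, but the Heisenberg algebra does not embed into $\mathfrak{sl}_2(\C) \oplus \mathfrak{sl}_2(\C)$, since any nilpotent subalgebra of the latter projects to a nilpotent subalgebra of each $\mathfrak{sl}_2(\C)$-factor and is therefore at most two-dimensional and abelian.

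The main obstacle is the case $\ker(j_1) = Z(\Lg)$; everything else reduces to the classification of $\mathfrak{sl}_2(\C)$-subalgebras of $\Ln$ together with elementary dimension counts for nilpotent subalgebras. An alternative, cleaner route would be to quote the list of eight admissible $\Lg$ from Theorem $4.1$ of \cite{BU59} and simply observe that neither $\mathfrak{sl}_2(\C) \ltimes V(3)$ nor $\mathfrak{sl}_2(\C) \ltimes \Ln_3(\C)$ appears on it, as the authors propose.
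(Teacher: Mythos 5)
Your proof is correct, but it takes a genuinely different route from the paper's. The paper disposes of this proposition in two lines by quoting Theorem $4.1$ of \cite{BU59}, which (via Rota--Baxter operators) lists all eight Lie algebras $\Lg$ for which $(\Lg,\mathfrak{sl}_2(\C)\oplus\mathfrak{sl}_2(\C))$ admits a PA-structure, and observing that none of them is perfect non-semisimple --- exactly the ``alternative, cleaner route'' you mention at the end. Your argument instead stays inside the toolkit of the present paper: the dimension count together with Proposition \ref{2.3} reduces everything to the two candidates $\mathfrak{sl}_2(\C)\ltimes V(3)$ and $\mathfrak{sl}_2(\C)\ltimes\Ln_3(\C)$; the first falls to Proposition \ref{3.10}; and the second is eliminated by the same $j_1,j_2$-plus-ideal-lattice analysis that the authors use for $\mathfrak{sl}_3(\C)$ in Proposition \ref{3.8} and Theorem \ref{3.9}. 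The individual steps all check out: the ideals of $\mathfrak{sl}_2(\C)\ltimes\Ln_3(\C)$ are indeed exactly $0$, $Z(\Lg)$, $\Ln_3(\C)$ and $\Lg$ (the remaining $\mathfrak{sl}_2(\C)$-submodules $V(2)$, $V(3)$, $V(3)\oplus V(1)$ and $V(3)\oplus V(2)$ fail to be ideals); any copy of $\mathfrak{sl}_2(\C)$ inside $\Ln$ is a factor or a twisted diagonal, so the restriction of $\Ln$ to it is a sum of $V(3)$'s and $V(1)$'s and, by complete reducibility, admits no $V(2)$-submodule, which kills the case $\ker(j_1)=Z(\Lg)$; and nilpotent subalgebras of $\Ln$ are abelian of dimension at most $2$, which kills the Heisenberg case $\ker(j_1)=\Ln_3(\C)$. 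What your version buys is self-containedness (no appeal to the external classification) and a visible parallel with the $\mathfrak{sl}_3(\C)$ argument; what the paper's version buys is brevity and the stronger information contained in the full list of admissible $\Lg$.
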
 

For case $(f)$ consider the perfect non-semisimple Lie algebra $\Lg=\mathfrak{sl}_2(\C)\ltimes V(2)$.
Let $(e_1,\ldots ,e_5)$ be a basis of $\Lg$ with $\mathfrak{sl}_2(\C)={\rm span}\{ e_1,e_2,e_3\}$, $V(2)={\rm span}\{ e_4,e_5\}$, and
Lie brackets given as follows:
\vspace*{0.5cm}
\begin{align*}
[e_1,e_2] & = e_3,     & [e_2,e_3] & = 2e_2,    & [e_3,e_4]& =e_4,\\
[e_1,e_3] & = -2e_1,   & [e_2,e_4] & = e_5,     & [e_3,e_5]& = -e_5, \\
[e_1,e_5] & = e_4.     &           &            &          &         \\
\end{align*}

\begin{ex}\label{3.13}
The pair of Lie algebras $(\Lg,\Ln)=(\mathfrak{sl}_2(\C)\ltimes V(2), \mathfrak{sl}_2(\C)\oplus \C^2)$ admits a PA-structure
given by
\begin{align*}
e_1\cdot e_5 & = e_4, \quad e_2\cdot e_4 = e_5,    \\
e_3\cdot e_4 & = e_4, \quad e_3\cdot e_5 = -e_5.    \\
\end{align*}
\end{ex}  
Here $\Ln$ is reductive with a $2$-dimensional center. Such examples are impossible when $\Ln$ is reductive
with a $1$-dimensional center, as the following result shows.

\begin{prop}
Let $(\Lg,\Ln)$ be a pair of Lie algebras, where $\Lg$ is perfect and $\Ln$ is reductive with a $1$-dimensional center.
Then there is no PA-structure on $(\Lg,\Ln)$.
\end{prop}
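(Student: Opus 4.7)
The plan is to exploit the splitting $\Ln = \Ls \oplus Z$, where $\Ls = [\Ln,\Ln]$ is semisimple and $Z = Z(\Ln)$ is the $1$-dimensional center, in order to force the PA-product $x \cdot y$ to land in $\Ls$ and then to derive a contradiction with perfectness of $\Lg$.

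The first step is to determine $\Der(\Ln)$. Since both $\Ls$ and $Z$ are characteristic ideals, every derivation preserves them; by Whitehead's lemma the restriction to $\Ls$ is inner, and on the $1$-dimensional $Z$ it is a scalar. Hence
\[
\Der(\Ln) = \ad_\Ln(\Ls) \oplus \C\cdot D_0,
\]
where $D_0$ vanishes on $\Ls$ and acts as the identity on $Z$. A short direct check shows that $D_0$ commutes with every derivation, so it is central in $\Der(\Ln)$, and therefore $[\Der(\Ln),\Der(\Ln)] = \ad_\Ln(\Ls)$. Since $L\colon \Lg \to \Der(\Ln)$ is a Lie algebra homomorphism and $\Lg$ is perfect, this forces $L(\Lg) = L([\Lg,\Lg]) \subseteq \ad_\Ln(\Ls)$.

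Writing $L(x) = \ad_\Ln(s(x))$ for a well-defined element $s(x) \in \Ls$, we obtain $x \cdot y = \{s(x),y\}_\Ln \in [\Ls,\Ln] = \Ls$ for all $x,y \in V$. Axiom \eqref{post1} rearranges to
\[
[x,y]_\Lg = (x\cdot y - y\cdot x) + \{x,y\}_\Ln .
\]
The first summand lies in $\Ls$ by the previous sentence; decomposing $x = x_\Ls + x_Z$, $y = y_\Ls + y_Z$ and using that $Z$ is central in $\Ln$, the second summand equals $\{x_\Ls,y_\Ls\}_\Ln \in [\Ls,\Ls] = \Ls$ as well. Hence $[\Lg,\Lg]_\Lg \subseteq \Ls$, and perfectness of $\Lg$ yields $V = \Lg \subseteq \Ls$, contradicting $\dim V = \dim \Ls + 1$.

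The only delicate point is the description of $\Der(\Ln)$ and the verification that $D_0$ is central; everything after that is a direct manipulation of axiom \eqref{post1}. Observe that the hypothesis $\dim Z = 1$ enters only in the final dimension count, which is consistent with Example \ref{3.13}, where a PA-structure does exist for a reductive $\Ln$ with $\dim Z = 2$.
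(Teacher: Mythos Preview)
Your proof is correct and shares its core insight with the paper's argument: since $\Lg$ is perfect and $Z(\Ln)$ is $1$-dimensional, the image $L(\Lg)$ must act trivially on $Z(\Ln)$. The paper phrases this as ``every $1$-dimensional module of the perfect Lie algebra $\Lh=L(\Lg)$ is trivial'' and then works inside the auxiliary Lie algebra $\Ln\rtimes\Lh$, using the embedding $\phi\colon\Lg\hookrightarrow\Ln\rtimes\Der(\Ln)$ and the decomposition $\Ln\rtimes\Lh=\phi(\Lg)\dotplus\Lh$ to conclude that $\Ln\rtimes\Lh$ is perfect; computing its commutator then gives $\Ln+\Lh=\Ls+\Lh$, a dimension contradiction. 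Your route is more elementary: you compute $\Der(\Ln)=\ad_\Ln(\Ls)\oplus\C D_0$ explicitly, observe that $D_0$ is central, deduce $L(\Lg)\subseteq\ad_\Ln(\Ls)$, and then read off $[\Lg,\Lg]\subseteq\Ls$ directly from axiom~\eqref{post1}. This avoids the semidirect-product machinery and the references to \cite{BU41,BU73,BU74} entirely.

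One small correction to your closing remark: the hypothesis $\dim Z=1$ does \emph{not} enter only in the final dimension count. It is already essential for the identity $[\Der(\Ln),\Der(\Ln)]=\ad_\Ln(\Ls)$. When $\dim Z=k\ge 2$ one has $\Der(\Ln)\cong\ad_\Ln(\Ls)\oplus\mathfrak{gl}(Z)$, whose commutator is $\ad_\Ln(\Ls)\oplus\mathfrak{sl}(Z)$; then $L(\Lg)$ can have a nontrivial $\mathfrak{sl}(Z)$-component, and $x\cdot z$ need not vanish for $z\in Z$. This is precisely what allows Example~\ref{3.13} to exist.
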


\begin{proof}
Assume that there exists a PA-structure $x\cdot y=L(x)(y)$ on $(\Lg,\Ln)$. Then by Proposition 
$2.11$ in \cite{BU41} we have an injective Lie algebra homomorphism 
\[
\phi\colon \Lg \hookrightarrow \Ln \rtimes \Der (\Ln),\; x\mapsto (x,L(x)).
\]
Writing $\Lh=L(\Lg)$ we obtain a direct vector space decomposition $\Ln\rtimes \Lh=\phi(\Lg)\dotplus \Lh$. Note that $\Lh$
is nonzero. Since $\Lg$ is perfect and $\phi$ and $L$ are homomorphisms, $\phi(\Lg)$ and $\Lh$ are perfect subalgebras of
$\Ln\rtimes \Lh$. Hence also $\phi(\Lg)\dotplus \Lh$ is perfect, see the proof of Lemma $2.3$ in \cite{BU74}, so that
$\Ln\rtimes \Lh$ is perfect. By assumption we have $\Ln=[\Ln,\Ln]\oplus Z(\Ln)$, where $\Ls=[\Ln,\Ln]$ 
is semisimple and $Z(\Ln)$ is $1$-dimensional. Since the commutator and the center of $\Ln$ are 
characteristic ideals in $\Ln$, and $\Ln$ is an ideal in $\Ln\rtimes \Lh$, both $[\Ln,\Ln]$ 
and $Z(\Ln)$ are ideals in $\Ln\rtimes \Lh$. We claim that $[Z(\Ln),\Lh]=0$ for the
Lie bracket in $\Ln\rtimes \Lh$. Since $Z(\Ln)$ is an ideal in  $\Ln\rtimes \Lh$, we have $[\Lh,Z(\Ln)]\subseteq Z(\Ln)$,
so that $Z(\Ln)$ is a $1$-dimensional $\Lh$-module. However, for a perfect Lie algebra, every 
$1$-dimensional module is trivial. The proof is the same as for a semisimple Lie algebra. 
Hence we obtain $[Z(\Ln),\Lh]=0$. It follows that
\[
[\Ln,\Lh]=[\Ls+Z(\Ln),\Lh]=[\Ls,\Lh]\subseteq \Ls,
\]
since $\Ls$ is an ideal in $\Ln\rtimes \Lh$. 
Because $\Ln\rtimes \Lh$ is perfect, we have
\begin{align*}
\Ln+\Lh & =[\Ln+\Lh,\Ln+\Lh] \\
        & =[\Ln,\Ln]+[\Ln,\Lh]+[\Lh,\Lh] \\
        & = \Ls+\Lh.
\end{align*}
However, because of $\Ln=\Ls\oplus Z(\Ln)$ we have $\dim(\Ln)=\dim(\Ls)+1$. Since
$\Ln\cap \Lh=\Ls\cap \Lh=0$, this implies $\dim (\Ln+\Lh)=\dim(\Ls+\Lh)+1$. This is a contradiction
to $\Ln+\Lh=\Ls+\Lh$.
\end{proof}

For case $(g)$ we have the following result.

\begin{prop}
Let $(\Lg,\Ln)$ be a pair of Lie algebras, where $\Lg$ is perfect and $\Ln$ is complete non-perfect.
Then there is no PA-structure on $(\Lg,\Ln)$.
\end{prop}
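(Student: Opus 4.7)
My plan is to apply Proposition \ref{2.7} to convert any hypothetical PA-structure on $(\Lg,\Ln)$ into a Rota-Baxter operator $R$ of weight $1$ on $\Ln$, and then to exploit perfectness of $\Lg$ by showing that both $R$ and $R+\id$ are Lie algebra homomorphisms $(\Lg,[\cdot,\cdot])\to(\Ln,\{\cdot,\cdot\})$ whose images together span the whole underlying vector space.

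Since $\Ln$ is complete, Proposition \ref{2.7} produces such an $R$ with $x\cdot y=\{R(x),y\}$. Substituting this into axiom \eqref{post1} gives the key identity
\[
[x,y]=\{R(x),y\}+\{x,R(y)\}+\{x,y\}.
\]
Applying $R$ to both sides and using the Rota-Baxter relation $\{R(x),R(y)\}=R(\{R(x),y\}+\{x,R(y)\}+\{x,y\})$ yields $R([x,y])=\{R(x),R(y)\}$, so $R$ is a Lie algebra homomorphism from $\Lg$ to $\Ln$. A parallel computation, expanding $\{R(x)+x,R(y)+y\}$ in $(\Ln,\{\cdot,\cdot\})$ and invoking the bracket formula, shows
\[
(R+\id)([x,y])=R([x,y])+[x,y]=\{(R+\id)(x),(R+\id)(y)\},
\]
so $R+\id$ is a homomorphism into $(\Ln,\{\cdot,\cdot\})$ as well.

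Since $\Lg=[\Lg,\Lg]$, both images $R(\Lg)$ and $(R+\id)(\Lg)$ must lie in $[\Ln,\Ln]$. The elementary identity $v=(R+\id)(v)-R(v)$ then expresses every $v$ in the common underlying vector space $V$ as an element of $R(\Lg)+(R+\id)(\Lg)\subseteq[\Ln,\Ln]$, so $V=[\Ln,\Ln]$, forcing $\Ln$ to be perfect. This contradicts the hypothesis that $\Ln$ is non-perfect, completing the argument.

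I expect the only step requiring any insight to be the recognition that $R+\id$ is also a homomorphism into $(\Ln,\{\cdot,\cdot\})$; once that brief computation is in hand, everything else is routine and I anticipate no further obstacle.
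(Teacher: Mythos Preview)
Your proof is correct and follows essentially the same route as the paper: the paper invokes Corollary~2.20 of \cite{BU59} to conclude that $\Ln$ must be perfect, while you have simply unpacked that corollary by verifying directly that both $R$ and $R+\id$ are Lie algebra homomorphisms $\Lg\to\Ln$ and then using $v=(R+\id)(v)-R(v)$ to force $\Ln=\{\Ln,\Ln\}$.
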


\begin{proof}
Assume that there exists a PA-structure on $(\Lg,\Ln)$. Since $\Ln$ is complete, this
PA-structure is given by $x\cdot y=\{R(x),y\}$ for a Rota-Baxter operator $R$ of weight $1$.
Because $\Lg$ is perfect, it follows by Corollary $2.20$ in \cite{BU59} that $\Ln$ is also 
perfect. This is a contradiction. 
\end{proof}

\section{PA-structures with $\Ln$ perfect}

In this section we study the existence question of PA-structures on pairs of complex Lie algebras $(\Lg,\Ln)$, where
$\Ln$ is perfect non-semisimple. We consider $7$ different cases for $\Lg$, namely $(a)$ $\Lg$ is abelian,
$(b)$ $\Lg$ is nilpotent non-abelian, $(c)$ $\Lg$ is solvable non-nilpotent, $(d)$ $\Lg$ is simple,
$(e)$ $\Lg$ is semisimple non-simple, $(f)$ $\Lg$ is reductive non-semisimple, and $(g)$ $\Lg$ is complete non-perfect.\\[0.2cm]
For case $(a)$ we have the following result.

\begin{prop}
There is no PA-structure on a pair $(\Lg,\Ln)$, where $\Lg$ is abelian and $\Ln$ is perfect.
\end{prop}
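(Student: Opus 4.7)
My plan is to derive a contradiction by extracting from any such PA-structure a commutative family of derivations of $\Ln$ whose joint image is all of $V$, and then invoking Jacobson's theorem on non-singular derivations to force $\Ln$ to be nilpotent---contradicting perfection (assuming $V\ne 0$; the $V=0$ case is trivial).

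I would start by assuming a PA-structure exists and setting $L(x)y := x\cdot y$. By \eqref{post3} each $L(x)$ is a derivation of $\Ln$, and by \eqref{post2} combined with $[x,y]_{\Lg}=0$ we get $[L(x),L(y)]=0$ for all $x,y$. Thus $\Lh := L(V)$ is an abelian subalgebra of $\Der(\Ln)$. Axiom \eqref{post1} with $[x,y]_{\Lg}=0$ reduces to $\{x,y\}=L(y)x-L(x)y$, so every bracket in $\Ln$ lies in $\Lh\cdot V:=\sum_{x\in V}L(x)(V)$. Because $\Ln$ is perfect, $V=\{V,V\}$, hence
\[
\Lh\cdot V=V.
\]

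Next I would invoke Jordan--Chevalley inside the algebraic Lie algebra $\Der(\Ln)$ to split $L(x)=L(x)_s+L(x)_n$ with both summands in $\Der(\Ln)$; commutativity of $\Lh$ propagates to its semisimple and nilpotent parts, so $\Lh_s:=\{L(x)_s:x\in V\}$ is a torus in $\Der(\Ln)$ and $\Lh_n:=\{L(x)_n:x\in V\}$ is a commuting family of nilpotent derivations. Let $V=\bigoplus_{\alpha}V_{\alpha}$ be the $\Lh_s$-weight decomposition; each $V_{\alpha}$ is $\Lh$-stable, so the equality $\Lh\cdot V=V$ restricts to $V_{\alpha}=\sum_xL(x)(V_{\alpha})$ for every $\alpha$. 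The main technical step is to rule out $V_0\ne 0$: on $V_0$ the semisimple parts vanish, so $\Lh$ acts by commuting nilpotent operators, and by Engel's theorem their images span only a proper subspace of $V_0$, contradicting $V_0=\sum_xL(x)(V_0)$. Hence $V_0=0$.

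Finally I would pick a generic $t\in\Lh_s$ avoiding the finitely many hyperplanes $\{\alpha=0\}$ for the nonzero weights $\alpha$, so that $t\in\Der(\Ln)$ is semisimple with no zero eigenvalue on $V$; Jacobson's theorem then forces $\Ln$ to be nilpotent, and being perfect as well gives $\Ln=0$, contradicting $V\ne 0$. (In the degenerate case $\Lh_s=0$ all of $\Lh$ is nilpotent, $V=V_0$, and the Engel step already gives $V=0$.) The main obstacle I anticipate is the weight-space step eliminating $V_0$; once it is in hand, Jacobson closes the argument immediately.
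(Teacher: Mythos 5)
Your proof is correct, but it takes a genuinely different route from the paper. The paper disposes of this case in one line: a PA-structure on $(\Lg,\Ln)$ with $\Lg$ abelian is equivalent to an LR-structure on $\Ln$, and by Burde--Dekimpe--Deschamps every Lie algebra admitting an LR-structure is $2$-step solvable, which a nonzero perfect $\Ln$ is not. You instead argue from scratch: from \eqref{post2} and $[\Lg,\Lg]=0$ you get that $\Lh=L(V)$ is a \emph{commuting} family of derivations of $\Ln$, from \eqref{post1} that $\{x,y\}=L(y)x-L(x)y$, hence $\Lh\cdot V=V$ by perfectness; then Jordan--Chevalley inside $\Der(\Ln)$, the weight decomposition for the torus $\Lh_s$, an Engel argument killing the zero weight space, and Jacobson's nonsingular-derivation theorem force $\Ln$ to be nilpotent and hence zero. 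All the steps check out (additivity of $x\mapsto L(x)_s$ on a commuting family, $\Lh$-stability of the weight spaces, the flag argument showing $\sum_x L(x)(V_0)\subsetneq V_0$ when $V_0\neq 0$, and the genericity argument producing an invertible semisimple derivation). What the paper's citation buys is brevity and a stronger structural conclusion ($2$-step solvability of any algebra carrying such a structure); what your argument buys is self-containedness modulo classical theorems, and it isolates exactly the feature of perfectness that is incompatible with a commuting family of derivations whose images generate all brackets. Two minor points worth recording if you write this up: the degenerate case $\Ln=0$ must be excluded (as it implicitly is in the paper), and the weight decomposition uses that the ground field is $\C$ (algebraically closed, characteristic zero), which is the standing assumption here.
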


\begin{proof}
Any PA-structure on $(\Lg,\Ln)$ with $\Ln$ abelian corresponds to an LR-structure on $\Lg$. However, every
Lie algebra admitting an LR-structure is $2$-step solvable by Proposition $2.1$ in \cite{BU34}. 
Hence there exists no PA-structure on $(\Lg,\Ln)$. 
\end{proof}  

For case $(b)$ we have the following result.

\begin{prop}
There is no PA-structure on a pair $(\Lg,\Ln)$, where $\Lg$ is nilpotent non-abelian and $\Ln$ is perfect.
\end{prop}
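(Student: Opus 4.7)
The plan is to argue by contradiction, adapting the semidirect-sum embedding used in the proof of Proposition $\ref{3.2}$ to exploit nilpotency of $\Lg$ in place of perfectness. Assuming a PA-structure on $(\Lg,\Ln)$ exists with associated Lie algebra homomorphism $L\colon\Lg\to\Der(\Ln)$, I form the injective embedding $\phi\colon\Lg\hookrightarrow\Ln\rtimes\Der(\Ln)$, $x\mapsto(x,L(x))$, and set $\Lh:=L(\Lg)\subseteq\Der(\Ln)$. As recalled in the proof of Proposition $\ref{3.2}$ (following \cite{BU73}), this yields the direct vector space decomposition
\[
\Ln\rtimes\Lh \;=\; \phi(\Lg)\dotplus\Lh.
\]

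Since $\Lg$ is nilpotent, $\phi(\Lg)\cong\Lg$ is nilpotent and $\Lh\cong\Lg/\ker L$, being a homomorphic image of $\Lg$, is nilpotent as well. Hence $\Ln\rtimes\Lh$ is the vector-space sum of two nilpotent subalgebras. The key step is to invoke the classical theorem for Lie algebras that a finite-dimensional Lie algebra over a field of characteristic zero which is the sum of two nilpotent subalgebras must be solvable (the Lie algebra analogue of the Kegel--Wielandt theorem). Applied to $\Ln\rtimes\Lh$, this gives that $\Ln\rtimes\Lh$ is solvable. But $\Ln$ is a perfect ideal of $\Ln\rtimes\Lh$, which is nonzero since $\dim\Ln=\dim\Lg\ge 3$ (every nilpotent non-abelian Lie algebra has dimension at least three). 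Since no solvable Lie algebra contains a nonzero perfect subalgebra, this yields the desired contradiction.

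The only delicate ingredient is the Kegel--Wielandt-type result for Lie algebras. Should one wish to avoid it, an equivalent route is to project $\Ln\rtimes\Lh$ onto its Levi quotient $\Ls'$: the images of $\phi(\Lg)$ and $\Lh$ remain nilpotent subalgebras whose sum is $\Ls'$, and the standard dimension estimate (any nilpotent subalgebra of a semisimple Lie algebra lies inside a Borel subalgebra, so has dimension at most the number of positive roots) forces $\Ls'=0$. Once again $\Ln\rtimes\Lh$ is solvable, contradicting that $\Ln$ is a nonzero perfect ideal.
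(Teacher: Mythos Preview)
Your main argument is valid, but it is far more elaborate than the paper's, which is a one-line citation: Proposition~4.3 of \cite{BU41} already asserts that if $(\Lg,\Ln)$ admits a PA-structure with $\Lg$ nilpotent, then $\Ln$ is solvable, and this immediately contradicts perfectness of $\Ln$. Your route via the embedding $\phi$, the decomposition $\Ln\rtimes\Lh=\phi(\Lg)\dotplus\Lh$, and the Lie-algebra analogue of the Kegel--Wielandt theorem does reach the same conclusion, but you are trading one black box (the cited proposition) for another of comparable depth: the statement that a finite-dimensional Lie algebra in characteristic zero which is the sum of two nilpotent subalgebras is solvable is correct, but it is not as ``classical'' as you suggest and requires its own nontrivial proof. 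In effect you are reproving Proposition~4.3 of \cite{BU41} by a different method rather than shortening anything.

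Your alternative argument, however, contains a genuine gap. The claim that a nilpotent subalgebra of a semisimple Lie algebra has dimension at most the number of positive roots does \emph{not} follow from the fact that it lies in a Borel, since a Borel has dimension equal to the rank \emph{plus} the number of positive roots. More importantly, a nilpotent subalgebra need not consist of ad-nilpotent elements of the ambient semisimple algebra---the Cartan subalgebra is abelian, hence nilpotent as a Lie algebra, yet its nonzero elements are ad-semisimple---so one cannot simply push such a subalgebra into the nilradical of a Borel. Whatever the status of the dimension bound itself, your stated justification for it is incorrect, and the alternative route does not stand as written.
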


\begin{proof}
Assume that there is a PA-structure on $(\Lg,\Ln)$. Since $\Lg$ is nilpotent, $\Ln$ must be solvable by
Proposition $4.3$ in \cite{BU41}. This is a contradiction.
\end{proof}

For case $(c)$, consider the perfect non-semisimple Lie algebra $\Ln=\mathfrak{sl}_2(\C)\ltimes V(2)$ with the Lie brackets
given before Example $\ref{3.13}$. We have a decomposition $\Ln=\Ln_1\dotplus \Ln_2$ into subalgebras
$\Ln_1={\rm span} \{e_1,e_4,e_5\}$ and $\Ln_2={\rm span} \{e_2,e_3\}$. Then by Propositions $2.7$ and $2.13$ in \cite{BU59},
the Rota-Baxter operator $R$ given by $R(x+y)=-y$ for all $x\in \Ln_1, y\in \Ln_2$ defines a PA-structure on the pair $(\Lg,\Ln)$, where
$\Lg\cong \Ln_1\oplus \Ln_2\cong \Ln_3(\C)\oplus \Lr_2(\C)$ is solvable non-nilpotent. The matrix of $R$ is given by
\[
R=\begin{pmatrix} 0 & 0 & 0 & 0 & 0 \cr
                  0 & -1 & 0 & 0 & 0 \cr
                  0 & 0 & -1 & 0 & 0 \cr
                  0 & 0 & 0 & 0 & 0 \cr
                  0 & 0 & 0 & 0 & 0 \end{pmatrix},
\]
and the Lie brackets of $\Lg$ are given by $[e_1,e_5]=e_4$ and $[e_2,e_3]=-2e_2$.        

\begin{ex}\label{4.3}
The pair of Lie algebras $(\Lg,\Ln)$ with $\Lg\cong \Ln_3(\C)\oplus \Lr_2(\C)$ and $\Ln= \mathfrak{sl}_2(\C)\ltimes V(2)$
admits a PA-structure given by
\begin{align*}
e_2\cdot e_1 & = e_3,     & e_3\cdot e_1 & = -2e_1,    & e_3 \cdot e_4 & =-e_4,\\
e_2\cdot e_3 & = -2e_2,   & e_3\cdot e_2 & = 2e_2,     & e_3\cdot e_5 & = e_5, \\
e_2\cdot e_4 & = -e_5.    &         &           &          &         
\end{align*}
\end{ex}  

For the cases $(d)$ and $(e)$ we have the following result.

\begin{prop}
There is no PA-structure on a pair $(\Lg,\Ln)$, where $\Lg$ is semisimple and  $\Ln$ is perfect non-semisimple.
\end{prop}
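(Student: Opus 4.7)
Suppose for contradiction that a PA-structure on $(\Lg,\Ln)$ exists. Let $L\colon \Lg\to \Der(\Ln)$ be the left-multiplication homomorphism, set $\Lh=L(\Lg)$, and let $\phi\colon \Lg\hookrightarrow \Ln\rtimes \Lh$, $x\mapsto (x,L(x))$, be the canonical embedding (Proposition $2.11$ of \cite{BU41}). As in earlier proofs, this yields a direct vector space decomposition $\Ln\rtimes \Lh=\phi(\Lg)\dotplus \Lh$. Since $\Lg$ is semisimple, so is its homomorphic image $\Lh$; both $\phi(\Lg)$ and $\Lh$ are perfect, so $\Ln\rtimes \Lh$ is perfect.

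Because $\Ln$ is perfect and non-semisimple, its solvable radical $\Lr=\rad(\Ln)=\nil(\Ln)$ is a nonzero nilpotent characteristic ideal of $\Ln$ and hence an ideal of $\Ln\rtimes \Lh$. By Mostow's theorem, one may choose a Levi subalgebra $\Ls$ of $\Ln$ that is $\Lh$-stable. Since $\Der(\Ls)=\ad(\Ls)$, the induced $\Lh$-action on $\Ls$ factors through a Lie algebra map $\rho\colon \Lh\to \Ls$, and a standard untwisting via the diagonal $\{(-\rho(h),h)\}$ gives $(\Ln\rtimes \Lh)/\Lr\cong \Ls\rtimes \Lh\cong \Ls\oplus \Lh$, which is semisimple. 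In particular $\Lr=\rad(\Ln\rtimes \Lh)$. As $\phi(\Lg)$ is semisimple and $\Lr$ is nilpotent, $\phi(\Lg)\cap \Lr=0$, so the projection $\bar\phi\colon \Lg\hookrightarrow \Ls\oplus \Lh$ is injective.

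A dimension count using $\phi(\Lg)\dotplus \Lh=\Ln\rtimes \Lh$ together with $\dim \Ln=\dim \Ls+\dim \Lr$ forces $\bar\phi(\Lg)\cap \Lh=L(\Lr)$ to have dimension $\dim \Lr$; hence $L|_{\Lr}\colon \Lr\to \Lh$ is injective. Moreover, from the PA-identity $[r_1,r_2]_\Lg=L(r_1)(r_2)-L(r_2)(r_1)+\{r_1,r_2\}_\Ln$ each term on the right lies in $\Lr$ (the first two because $L(\Lg)\subseteq \Der(\Ln)$ preserves $\Lr$, the third because $\Lr$ is an ideal of $\Ln$), so $\Lr$ is a Lie subalgebra of $\Lg$ with respect to the $\Lg$-bracket and $L|_{\Lr}$ is in fact an injective Lie algebra homomorphism from $\Lr$ into the semisimple $\Lh$.

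The main obstacle is to convert this configuration into a contradiction. The plan is to exploit that each $L(r)\in \Lh$ restricts to an inner derivation of $\Ls$ (since $\Ls$ is semisimple and $\Lh$-stable, and $\Der(\Ls)=\ad(\Ls)$), producing a Lie algebra map $\sigma\colon \Lr\to \Ls$; replacing $L(r)$ by $L(r)-\ad_{\Ln}(\sigma(r))$ gives a derivation of $\Ln$ that kills $\Ls$ and is thus determined by its action on $\Lr$ alone. Combining this with Proposition \ref{2.1} applied to the perfect Lie algebra $\Ln\rtimes \Lh=(\Ls\oplus \Lh)\ltimes \Lr$, which controls the $(\Ls\oplus \Lh)$-module structure on $\Lr/[\Lr,\Lr]$, should force the nilpotent subspace $\Lr$ to embed as a semisimple subalgebra of $\Lh$ of its own dimension, which is only possible for $\Lr=0$, contradicting that $\Ln$ is non-semisimple.
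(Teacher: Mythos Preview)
The paper's proof is one line: Theorem~3.3 of \cite{BU73} states that if $\Lg$ is semisimple and $(\Lg,\Ln)$ admits a PA-structure then $\Lg\cong\Ln$, which immediately contradicts $\Ln$ being non-semisimple. Your first three paragraphs essentially begin to reprove this rigidity theorem from scratch, and the setup (the decomposition $\Ln\rtimes\Lh=\phi(\Lg)\dotplus\Lh$, the identification $\rad(\Ln\rtimes\Lh)=\Lr$, the injectivity of $L|_{\Lr}$, the fact that $\Lr$ is closed under the $\Lg$-bracket) is broadly correct, though the invocation of ``Mostow's theorem'' for an $\Lh$-stable Levi subalgebra of $\Ln$ deserves a precise reference.

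But your argument is not finished: you yourself write ``The main obstacle is to convert this configuration into a contradiction'' and then give only a plan. That plan has a genuine gap. You have shown that $\Lr$, \emph{with the $\Lg$-bracket}, embeds via $L$ into the semisimple $\Lh$; you then speak of the ``nilpotent subspace $\Lr$'' and hope to force it to appear as a semisimple subalgebra of $\Lh$, concluding $\Lr=0$. The trouble is that $\Lr$ is nilpotent only for the $\Ln$-bracket; for the $\Lg$-bracket it is merely some subalgebra of the semisimple Lie algebra $\Lg$, and there is no obstruction whatsoever to such a subalgebra embedding in $\Lh$. Nothing in your sketch forces $L(\Lr)$ to be semisimple, nor even an ideal of $\Lh$: indeed $\Lr$ need not be a $\Lg$-ideal, since in $[x,r]_{\Lg}=L(x)(r)-L(r)(x)+\{x,r\}_{\Ln}$ the term $L(r)(x)$ has no reason to lie in $\Lr$ for general $x\in\Lg$. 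The modification $L(r)\mapsto L(r)-\ad_{\Ln}(\sigma(r))$ and the appeal to Proposition~\ref{2.1} do not by themselves produce the claimed contradiction. To complete a direct argument along these lines you would need the substantially deeper machinery behind Theorem~3.3 of \cite{BU73}.
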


\begin{proof}
Assume that there exists a PA-structure on  $(\Lg,\Ln)$, where $\Lg$ is semisimple. Then by Theorem $3.3$ in \cite{BU73},
$\Lg$ is isomorphic to $\Ln$. This is a contradiction.  
\end{proof}  

For case $(f)$ we have the following example.

\begin{ex}\label{4.5}
The pair of Lie algebras $(\Lg,\Ln)=( \mathfrak{sl}_2(\C)\oplus \C^2, \mathfrak{sl}_2(\C)\ltimes V(2))$ admits a PA-structure
given by
\begin{align*}
e_4\cdot e_2 & = e_5, \quad e_5\cdot e_1 = e_4,    \\
e_4\cdot e_3 & = e_4, \quad e_5\cdot e_3 = -e_5.    \\
\end{align*}
\end{ex}  
Here we use the Lie brackets for  $\mathfrak{sl}_2(\C)\ltimes V(2)$ as in Example $\ref{3.13}$, and the standard Lie brackets
of $\mathfrak{sl}_2(\C)={\rm span}\{ e_1,e_2,e_3\}$ for $\Lg$. This PA-structure can also be realized by the Rota-Baxter operator
\[
R=\begin{pmatrix} 0 & 0 & 0 & 0 & 0 \cr
                  0 & 0 & 0 & 0 & 0 \cr
                  0 & 0 & 0 & 0 & 0 \cr
                  0 & 0 & 0 & -1 & 0 \cr
                  0 & 0 & 0 & 0 & -1 \end{pmatrix}
\]
for the decomposition $\Ln=\Ln_1\dotplus \Ln_2$, where $\Ln_1={\rm span} \{ e_1,e_2,e_3 \}$, 
$\Ln_2={\rm span} \{ e_4,e_5 \}$, $\Ln=\mathfrak{sl}_2(\C)\ltimes V(2)$ and $x\cdot y=\{R(x),y\}$. \\[0.2cm]
Finally, for case $(g)$ we have the following example.

\begin{ex}\label{4.6}
The pair of Lie algebras $(\Lg,\Ln)=(\mathfrak{sl}_2(\C)\oplus \Lr_2(\C), \mathfrak{sl}_2(\C)\ltimes V(2))$
admits a PA-structure given by
\begin{align*}
e_4\cdot e_2 & = e_5,   & e_5\cdot e_1 & = e_4,    & e_5 \cdot e_4 & =-e_4,\\
e_4\cdot e_3 & = e_4,   & e_5\cdot e_3 & = -e_5,   & e_5\cdot e_5 & = -e_5, \\
\end{align*}
\end{ex}
Here $\Lg$ is complete non-perfect, and the Lie brackets of $\Lg$ are given by the standard brackets for
$\mathfrak{sl}_2(\C)={\rm span} \{ e_1,e_2,e_3 \}$, and by $[e_4,e_5]=e_4$ for $\Lr_2(\C)={\rm span}\{ e_4,e_5 \}$.

\section{The existence question}

We summarize the existence results for post-Lie algebra structures
from the previous sections and from the papers \cite{BU41,BU44,BU51,BU58,BU59,BU64,BU73} as follows. 

\begin{thm}
The existence table for post-Lie algebra structures on pairs $(\Lg,\Ln)$ is given as follows:
\vspace*{0.5cm}
\begin{center}
\begin{tabular}{l|llllllll}
$(\Lg,\Ln)$ & \color{blue}{$\Ln$ abe} & \color{ForestGreen}{$\Ln$ nil} & $\Ln$ sol & \color{red}{$\Ln$ sim}
& \color{Dandelion}{$\Ln$ sem} & \color{purple}{$\Ln$ red} &  \color{cyan}{$\Ln$ com} & \color{Fuchsia}{$\Ln$ per} \\[1pt]
\hline
\color{blue}{$\Lg$ abelian} & $\ck$ & $\ck$ & $\ck$ & $-$ & $-$ & $-$ & $\ck$ & $-$ \\[1pt]
\color{ForestGreen}{$\Lg$ nilpotent} & $\ck$ & $\ck$ & $\ck$ & $-$ & $-$ & $-$ & $\ck$ & $-$ \\[1pt]
 $\Lg$ solvable & $\ck$ & $\ck$ & $\ck$ & $\ck$ & $\ck$ & $\ck$ & $\ck$ & $\ck$ \\[1pt]
\color{red}{$\Lg$ simple} & $-$ & $-$ & $-$ & $\ck$ & $-$ & $-$ & $-$ &  $-$ \\[1pt]
\color{Dandelion}{$\Lg$ semisimple} & $-$ & $-$ & $-$ & $-$ & $\ck$ & $-$ & $-$ &  $-$ \\[1pt]
\color{purple}{$\Lg$ reductive}  & $\ck$ & $\ck$ & $\ck$ & $-$ & \color{red}{$?$} & $\ck$ & $\ck$ & $\ck$ \\[1pt]
\color{cyan}{$\Lg$ complete}  & $\ck$ & $\ck$ & $\ck$ & $\ck$ & $\ck$ & $\ck$ & $\ck$ & $\ck$ \\[1pt]
\color{Fuchsia}{$\Lg$ perfect}  & $-$ & \color{red}{$?$} & $-$ & \color{red}{$?$} & \color{red}{$?$} & $\ck$ & $-$ &  $\ck$ \\[1pt]  
\end{tabular}
\end{center}
\end{thm}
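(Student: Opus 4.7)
The plan is to verify the table cell by cell, organising the argument by rows. Every diagonal entry is marked $\ck$ because for $\Lg=\Ln$ the zero product $x\cdot y=0$ trivially satisfies the PA-axioms, and this accounts at once for the simple/simple, semisimple/semisimple, reductive/reductive, complete/complete and perfect/perfect entries. The rows where $\Lg$ is solvable and where $\Lg$ is complete non-perfect are entirely $\ck$: here constructions from \cite{BU41,BU59,BU73} via Rota-Baxter operators of weight $1$ supply a PA-structure for every listed type of $\Ln$, since complete (in particular solvable) Lie algebras admit enough Rota-Baxter operators. The rows $\Lg$ simple and $\Lg$ semisimple non-simple collapse by Theorem $3.3$ of \cite{BU73}, which forces $\Lg\cong \Ln$; this yields $\ck$ only in the matching column and $-$ in all others.

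The rows $\Lg$ abelian and $\Lg$ nilpotent non-abelian split into two kinds of entries. For $\Ln$ of type abe, nil, sol or com one cites the construction results of \cite{BU41,BU58,BU64} (pre-LR structures, LR-structures, Rota-Baxter constructions), while for $\Ln$ simple, semisimple or reductive, non-existence follows from Proposition $4.3$ of \cite{BU41}, which says that nilpotency of $\Lg$ forces solvability of $\Ln$. The final entry in these rows, $\Ln$ perfect, is covered by the new Propositions $4.1$ and $4.2$ above. The $\Lg$ reductive row mirrors the nilpotent row: positive entries come from direct-sum constructions, $\Ln$ simple is excluded because the semisimple part of $\Lg$ must then equal $\Ln$ and swallow the center, and the semisimple column is explicitly recorded as open in the table.

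The perfect row is the contribution of Section~$3$ and is assembled as follows: $\Ln$ abelian is ruled out by Proposition \ref{3.1}; $\Ln$ solvable non-nilpotent by the cited Proposition $4.4$ of \cite{BU41} (restated as our Proposition in case~$(c)$); $\Ln$ reductive non-semisimple is witnessed by Example \ref{3.13}; $\Ln$ complete non-perfect is excluded by Proposition \ref{3.16}; $\Ln$ perfect is the diagonal entry. The three entries marked \color{red}$?$\color{black}{} — namely $\Ln$ nilpotent, $\Ln$ simple and $\Ln$ semisimple non-simple — are precisely the three cases highlighted in the introduction as unsettled in general, for which only the partial non-existence results Proposition \ref{3.2}, Theorem \ref{3.9} and Proposition \ref{3.10} have been obtained.

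The step that requires most care is bookkeeping rather than mathematics: ensuring that each positive $\ck$ really corresponds to a pair of the precise type indicated (for instance, the nilpotent witness is genuinely non-abelian, the reductive witness genuinely has non-trivial center, etc.), and that each $-$ is justified by a non-existence theorem covering the entire class, not merely specific representatives. No fundamentally new argument is needed beyond those already established in Sections~$3$ and~$4$ together with the cited literature; the theorem is a consolidation of those results, with the genuinely novel content being the perfect row filled in by the present paper.
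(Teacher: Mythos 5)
Your proposal is correct and follows essentially the same route as the paper, which offers no separate argument for this theorem beyond consolidating the results of Sections~3 and~4 with the cited literature; your cell-by-cell bookkeeping, the use of the trivial product $x\cdot y=0$ on $\Lg=\Ln$ for the diagonal, and the attribution of the perfect row and column to the new propositions and examples all match the intended justification. One slip worth correcting: the parenthetical ``complete (in particular solvable)'' asserts a false inclusion in both directions, and the Rota--Baxter correspondence of Proposition~\ref{2.7} requires $\Ln$ (not $\Lg$) to be complete; the $\ck$ entries in the solvable and complete rows for non-complete $\Ln$ instead come from the pre-Lie/LR constructions and the decomposition results of \cite{BU41,BU59,BU64}. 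This does not affect any entry of the table.
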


A checkmark only means that there is {\em some} non-trivial pair $(\Lg,\Ln)$ of Lie algebras with the
given algebraic properties admitting a PA-structure. A dash means that there does not exist 
any PA-structure on such a pair. Recall that the classes are (to avoid unnecessary overlap) 
abelian, nilpotent non-abelian, solvable non-nilpotent, simple, semisimple non-simple, reductive 
non-semisimple and complete non-perfect.

\section*{Acknowledgments}
Dietrich Burde and Mina Monadjem are supported by the Austrian Science Foun\-da\-tion FWF, grant P 33811. 
Karel Dekimpe is supported by Methusalem grant Meth/21/03 - long term structural funding of the Flemish Government.

\end{document}